\newcommand{\p}{\partial}
\newcommand{\e}{\varepsilon}
\newcommand{\al}{\alpha}
\newcommand{\R}{{\mathbb R}}
\newcommand{\N}{{\mathbb N}}
\newcommand{\aA}{{\cal A}}
\newcommand{\FF}{{\cal F}}
\newcommand{\HH}{{\cal H}}
\newcommand{\sS}{{\cal S}}
\newcommand{\XX}{{\cal X}}
\newcommand{\YY}{{\cal Y}}
\newcommand{\ri}{{ \to }}
\newcommand{\ph}{\varphi}
\newcommand{\Z}{ \mathbb{Z}}
\newcommand{\dd}{{\textup d}}
\newcommand{\supp}{\mathop{\rm supp}\nolimits}
\newcommand{\diver}{\mathop{\rm div}\nolimits}
\newcommand{\rot}{\mathop{\rm curl}\nolimits}
\newcommand{\curl}{\mathop{\rm curl}\nolimits}
\theoremstyle{plain}
\newtheorem{theorem}{Theorem}[section]
\newtheorem{lemma}[theorem]{Lemma}%[section]
\newtheorem{proposition}[theorem]{Proposition}
\newtheorem*{main result}{Main result}
\newtheorem{definition}[theorem]{Definition}
\theoremstyle{remark}
\newtheorem{remark}[theorem]{Remark}
\numberwithin{equation}{section}
\begin{document}

\author{Hayk Nersisyan}
\date{}
\title{ Stabilization of the 2D incompressible  Euler system in an  infinite strip }

 \maketitle\begin{center}

CNRS UMR 8088, D\'epartement de Math\'ematiques\\
Universit\'e de Cergy--Pontoise, Site de Saint-Martin\\
            2 avenue Adolphe Chauvin\\
       F95302 Cergy--Pontoise Cedex, France\\
       E-mail: Hayk.Nersisyan@u-cergy.fr
 \end{center}

\vspace{15 pt}

{\small\textbf{Abstract.} The paper is devoted to the study of a
 stabilization problem for the  2D incompressible Euler system  in an  infinite strip
 with boundary controls. We show that for any stationary solution $(c,0)$ of the Euler system
 there is a control which is supported in a given bounded part of the boundary of the strip and stabilizes the system to   $(c,0)$.
   }\\\\
 \tableofcontents
 \newpage
 \section{Introduction}
We consider the incompressible two-dimensional Euler system
\begin{align}\label{I.1}
\dot{u}+\langle u,\nabla\rangle u+\nabla p=0,\quad  \diver u=0 ,
\end{align}
where $u=(u_1,u_2)$ and $p $  are unknown velocity field and
pressure of the fluid, and
\begin{equation}
\langle u,\nabla\rangle
v=\sum_{i=1}^3u_i(t,x)\frac{\p}{\p{x_i}}v.\nonumber
\end{equation}
The space variable $x=(x_1,x_2)$ belongs to the strip $D$ defined by
 \begin{align} \label{0:cyl}
D:=\{(x_1,x_2): \, x_1\in \R, x_2\in(-1,1)\}.
\end{align}
 Let us take two open intervals $ (a,b), (a+d,b+d)\subset \R$    and denote
\begin{align}\label{0:gammaisah1}
\Gamma_0=  (a,b) \times \{1\} \cup(a+d,b+d)  \times \{-1\}.
\end{align}
The aim of this paper is the study of stabilization of (\ref{I.1})
with boundary controls supported by $\Gamma_0$. System (\ref{I.1})
is completed with the boundary and initial conditions
\begin{align}
u\cdot n&=0  \text{ on }  \Gamma\setminus \Gamma_0  ,\label{I:3}\\
u(x,0)&=u_0(x),\label{I:init}
\end{align}
where $\Gamma:=\p D$ and $n$ is the outward unit normal vector
on~$\Gamma$. In particular, (\ref{I:3}) is equivalent to $ u_2=0$
on $\Gamma\setminus \Gamma_0$.

  For any  integer $s\ge0$  we denote by  $H^s  (D  )$ the
space of vector functions $u = (u_1, u_2)$ whose components belong
to the Sobolev space of order~$s$ and by $\|\cdot \|_{s,D}$  the
corresponding norm. If there is no confusion, we drop the index
$D$. In the case $s=0$, we write $\|\cdot\|:=\|\cdot\|_0.$ For any
integer  $s>0$ we define $\HH^s(D)$ as the space of distributions
$u$ in $D$ with $\nabla u\!\in~ \!H^{s-1}(D)$. We equip $\HH^s(D)$
with the semi-norm
$$ \|u \|_{\HH^s(D)}:= \|\nabla u \|_{ s-1}.$$
 We denote  by
$\dot H ^s (D)$  the   quotient space $\HH^s(D)/ \R$. The
following theorem is our main result.
\begin{main result}\label{1:T.Int} Let $ s\ge4 $ be an integer.
Then for any   constant $c\in \R$ and initial function $u_0\in ~\!
H^s(D)$ that  decays fast at infinity and satisfies the relations
\begin{align*}
 \diver u_0 =0, \quad u_0\cdot n =0   \text{ on }  \Gamma\setminus\Gamma_0
\end{align*}
  there exists a solution
$(u,p) \in C(\R_+, C(\overline D)\cap \dot H^s(D))\times C(\R_+,
\dot H^{s}(D)) $  of (\ref{I.1}), (\ref{I:3}) and (\ref{I:init})
such that
\begin{align*}
 \lim_{t \rightarrow \infty }(\|u(\cdot,t)-(c,0)\|_{L^{\infty}}+ \|\nabla u\|_{ s-1  }+ \|\nabla p\|_{ s-1  }) =0.
 \end{align*}
\end{main result}
For the exact statement see Theorem \ref{T:himnth}. In this
formulation the control is not given explicitly, but we can assume
that control acts on the system as a boundary condition on
$\Gamma_0$.

Before turning to the ideas of the proof, let us describe in a few
words some previous results on the controllability of Euler and
Navier--Stokes systems. Coron \cite {cor} introduced the return
method to show exact boundary controllability of 2D incompressible
Euler system in a bounded domain. Glass \cite {gla} generalized
this result for 3D Euler system. Chapouly \cite{chap}  using
return method proved the global null controllability of the
Navier--Stokes system in rectangle. Recently, Glass and  Rosier
\cite{glro} proved the controllability of the motion of a rigid
body, which is surrounded by an incompressible fluid.
Controllability of Euler and Navier--Stokes systems with
distributed controls is studied in \cite{agr1,  fuim,  hn, shi1};
see also the book \cite{Corbook} for further references.

Notice that the above papers concern the problem of
controllability of the fluid in a bounded domain. In this paper,
we develop   Coron's return method to get the controllability of
velocity of 2D Euler system in an unbounded strip. This method
consists in reducing the controllability of nonlinear system to
the linear one. To this end, one  constructs a particular solution
$ (\overline u, \overline p)$ of Euler system and a sequence of
balls $\{B_i\}$ covering  $\overline D$, such that
 \begin{itemize}
   \item   [$(P)$] Any ball $B_i$ driven by the flow of $\overline
u $ leaves $\overline D$ through $ \Gamma _0$ at some time.
     \end{itemize}
Then the linearized
  system around  $\overline
u $ is controllable.  In our case, since the domain $D$ is
unbounded, the number of balls $B_i$ is infinite, thus we cannot
construct a bounded function $\overline u$, whose flow moves all
balls outside $D$ in a finite time. However, we can find a
particular solution $\overline u$ such that property $(P)$ holds
in infinite time. This proves the  stabilization of linearized
system in infinite time.

To show that controllability of linearized system implies  that of
the nonlinear system, we need to prove that $(P)$ also holds for
any $\tilde u$    sufficiently close to  $\overline u$. This is
obvious in the case of bounded domain. In our case, to prove this,
we need some additional properties for $\overline u$. In
particular, we need to construct a solution $\overline u$,
 which decays at infinity
faster than  $ 1/ x_1^2$. As our particular solution $\overline u$
is a combination of the Green functions of the Laplacian with
Neumann boundary condition,  we need to  prove that  Green
functions  decay at infinity. This property is a consequence of
elliptic regularity and some explicit formulas for solutions of
the Laplace equation in a strip.

The paper is organized as follows. In Section \ref{S:1}, we give preliminaries on Poisson and Euler equations in an unbounded
 strip.  The main results of the paper are
presented in  Section  \ref{S:2}. In  Section \ref{S:3},  we
construct the particular solution $\overline u$. In the Appendix,
we prove an auxiliary result used in Section \ref{S:1}.

\textbf{Acknowledgments.}   The author would like to express deep
gratitude to Armen Shirikyan for drawing his attention to this
problem and  for many fruitful suggestions and also to Nikolay Tzvetkov
for useful remarks on the Euler system.
\\
\newline

 \textbf{Notation.}

Let $J_T:=[0, T )$.   The space of continuous functions $u: J_T
\rightarrow X$ is denoted by $C(J_T,X )$. For any integer $s\ge0$
or $s=\infty$, we denote  $$C^s_b(D) = \{u \in C^s(D) :\,\,
\|u\|_{L^\infty(D)} < \infty\}.$$ We set  $\dot H^\infty (D
):=\cap_{s=0}^\infty \dot H^s (D )$. Define $$ \sS(D):=\{ u\in
L^2(D):\ x_1^\al \p ^\beta u(x_1,x_2)\in L^2(D)\text{ for any }
\al\in \R_+, \beta\in \Z_+^2 \}.$$ For a vector field $ u = (u_1,
u_2)$ we set
$$\curl u= \p_1u_2-\p_2u_1. $$
  The interior of a set $K$ is denoted by $int(K)$. Let
$B(x_0,r)$ be the closed ball in $\R^2$ of radius $r$ centred at
$x_0$. We denote by $C$ a universal constant whose value may
change from line to line.

 \section{Preliminaries}\label{S:1}
In this section, we present some auxiliary results on Poisson and Euler equations in an unbounded
 strip. The methods used in
their proofs are well known and in many cases we confine ourselves
to a brief description of the main ideas.

 \subsection{Poisson equations in an unbounded strip}\label{S:2.1}

First, let us describe the spaces $\dot H^s(D)$.
\begin{proposition}\label{P:Hikaruc} For any integer $s \ge 1$ we
have
\begin{itemize}
  \item  [(i)]  The space  $\dot H^s(D)$ is complete.
   \item   [(ii)]
$
  \HH^s(D) \, =\{u\in H^s_{loc}(D):\, \nabla u\in H^{s-1} \}.
$
   \item   [(iii)] If $s\ge 3$, then for any $u\in \HH^s(D)$ there is a constant $C$ depending on $u$ such that
   $$ |u(x_1,x_2)|\le C|x_1|+C$$
 holds for  all $x\in D$.
     \end{itemize}
\begin{proof} Let $ \{u_n\} \subset \dot H^s(D)$ be a Cauchy sequence. Then there is $v\in H^{s-1}(D)$ such that $ \nabla
u_n\ri v$ in $H^{s-1}(D)$ as $n\ri \infty$, and for any $\ph\in
C_0^\infty(D)  $ such that $ \diver \ph=0$, we  have
\begin{align*}
0= \lim _{n\ri \infty}(\nabla u_n,\ph)_{L^2}=(v,\ph)_{L^2}.
\end{align*}
 Hence,
$v=\nabla z$, where $z\in \dot H^s(D)$. This proves that $\dot
H^s(D)$ is complete. Now let us prove assertion $(ii)$.  Clearly
the space in the right-hand side  is contained in $\dot H^s(D)$.
Let us take a function $u\in \dot H^s(D)$, a compact set $K\subset
D$ and let us show that $ u\in H^s(K)$. Take two functions
$\chi,\chi_1\in C_0^\infty(D)$ and a compact  set $K_1\subset D$
with $ int( K_1 )\supset {K} $ such that $\chi =1$ in $K_1$ and $
\chi_1=1$ in $ \tilde K_1:=\supp\chi$. Then there exists $r\in \N$
such that $\chi_1 u\in H^{-r}(D)$. This implies that $ u\in
H^{-r}(\tilde K_1)$, hence
$$\Delta(\chi u )=2\nabla\chi\nabla u +\chi\Delta u+ u\Delta\chi \in H^{\min(-r;s-2)}( \tilde
K_1).$$ The elliptic regularity implies   $\chi u \in
H^{\min(-r+2;s)}(D)$, thus   $ u~\!\!\in~\!\!
H^{\min(-r+2;s)}(K_1)$. Repeating this argument for a compact set
$ K_2\subset  K_1$ with $ int (K_2) \supset    {K} $  we can show
that $u \in H^{\min(-r+4;s)}(K_2)$. Iterating this, we get $u \in
H^{s}(K)$. This completes the proof of assertion $(ii)$.

It is easy to see      that $(ii)$ implies $(iii)$. Indeed, from
$(ii)$ we get $$ u(x_1,x_2)=\int_0^{x_1}\p_1 u(y,x_2)\dd
y+u(0,x_2).$$ The Sobolev inequality yields $(iii)$.
\end{proof}

\end{proposition}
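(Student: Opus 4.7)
The three parts build on one another: (i) uses a standard distributional argument, (ii) is the heart of the matter and requires an elliptic bootstrap, and (iii) follows essentially for free from (ii) by integrating along the $x_1$-direction.

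For (i), I would start from a Cauchy sequence $\{u_n\}\subset \dot H^s(D)$. By definition the gradients $\nabla u_n$ form a Cauchy sequence in $H^{s-1}(D)$, so they converge to some $v\in H^{s-1}(D)$. The key point is to recognize $v$ as a gradient. For any $\varphi\in C_0^\infty(D)$ with $\diver\varphi=0$, the identity $(\nabla u_n,\varphi)_{L^2}=0$ passes to the limit and gives $(v,\varphi)_{L^2}=0$; by the de Rham lemma on the simply connected open set $D$, $v=\nabla z$ for some distribution $z$, which then lies in $\HH^s(D)$ and serves as the limit of $\{u_n\}$ in $\dot H^s(D)$.

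For (ii), the inclusion $\supset$ is immediate from the definition. For $\subset$, fix a compact $K\subset D$ and pick nested compacts $K\subset \mathrm{int}(K_j)\subset K_j$ together with cut-offs $\chi_j\in C_0^\infty(D)$ equal to $1$ on $K_{j+1}$ and supported in $K_j$. The starting observation is that any distribution $u\in\HH^s(D)$, when localized by a compactly supported cut-off, lies in some Sobolev space $H^{-r}$ (this is just the local structure of distributions). Then the identity
\begin{equation*}
\Delta(\chi u)=2\nabla\chi\cdot\nabla u+\chi\Delta u+u\,\Delta\chi
\end{equation*}
combined with $\nabla u\in H^{s-1}$, i.e.\ $\Delta u\in H^{s-2}$, gives $\Delta(\chi u)\in H^{\min(-r,s-2)}$ on $\supp\chi$. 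Interior elliptic regularity gains two derivatives, so $u\in H^{\min(-r+2,s)}$ on a slightly smaller compact. Iterating the argument through the nested compacts gains two derivatives per step until we saturate at $s$, producing $u\in H^s(K)$. The main subtlety here is being careful that after each regularity gain the next cut-off is taken on a strictly smaller neighborhood (so that the product $u\,\Delta\chi$ is controlled by the previous step).

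For (iii), once (ii) is available the rest is soft. Since $s\ge 3$, $\nabla u\in H^{s-1}(D)\subset H^2(D)$, and Sobolev embedding in the strip gives $\nabla u\in L^\infty(D)$. By (ii), $u\in H^s_{\mathrm{loc}}(D)$, hence continuous, so the trace $u(0,x_2)$ is bounded on $\{0\}\times[-1,1]$. Writing
\begin{equation*}
u(x_1,x_2)=u(0,x_2)+\int_0^{x_1}\p_1 u(y,x_2)\,\dd y
\end{equation*}
yields $|u(x_1,x_2)|\le C|x_1|+C$. The only step that takes real work is the bootstrap in (ii); parts (i) and (iii) are clean consequences of it.
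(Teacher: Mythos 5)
Your proposal is correct and follows essentially the same route as the paper in all three parts: the de Rham argument for completeness, the elliptic bootstrap through nested compacts and cut-offs for (ii), and the integration of $\p_1 u$ along the strip combined with the Sobolev embedding for (iii). The only difference is that you spell out a few details (the $L^\infty$ bound on $\nabla u$, the boundedness of $u(0,\cdot)$) that the paper leaves implicit.
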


Now we summarize some facts about  Poisson equation. Let us take a
non-negative function $\gamma\in C_0^\infty(\R)$ such that $\supp
\gamma = [a,b]$ and $ \gamma\neq0 $ in $(a,b)$ and define
 \begin{align}
\tilde D:=\{ (x_1,x_2):\quad x_1\in \R, \,\,x_2\in (
-1-\gamma(x_1-d),1+\gamma(x_1) )\} \label{0:0:dtildisahm}
\end{align}
  (see figure 1).

\begin{figure}[htbp]
\begin{center}
\includegraphics{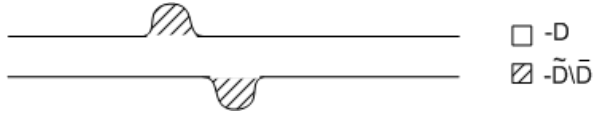}
\end{center}
\caption{\footnotesize Domain $\tilde D$}
\end{figure}

 Let us take $D'=D$ or $D'=\tilde D$ and consider the Dirichlet problem for the
Poisson equation:
\begin{align}
\Delta u=  f\,\,\, &\text{ in}\quad D',\label{0:11121}\\
 u =0 \,\,\,&\text{ on} \quad
\Gamma',\label{0:21121}
\end{align}
where  $\Gamma'=\p D'$ and $f\in  L^{2}({D'})$. We say that $u \in
H^1_0({D'}) $ is a solution of  (\ref {0:11121}), (\ref {0:21121})
if
\begin{align*}
\int_{D'} \nabla u \nabla \theta \dd x =-\int_{D'} f   \theta \dd x
\end{align*}
for any $\theta \in  H^1_0({D'}) $. We have  the following result for the  well-posedness of this problem.
\begin{proposition}\label{T:poisMazDir} For any integer $s \ge 0$ and for any  $ f\in  H^{s}({D'})$
problem (\ref {0:11121}), (\ref {0:21121}) has a unique solution
$u\in H^{s+2}({D'})  $. Moreover,
\begin{align}\label{E:poisdirichgn}
\| u\|_{ {s+2}  } \le C \|f\|_s,
\end{align}
where $C$ depends only on $s$.
\end{proposition}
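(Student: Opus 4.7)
The plan is to establish existence and uniqueness of a weak $H^1_0$ solution via the Lax--Milgram theorem, and then to bootstrap to full $H^{s+2}$ regularity by a localization argument that exploits the (approximate) translation invariance of the strip in the $x_1$ direction.

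First, I would prove existence and uniqueness of a weak solution in $H^1_0(D')$. Since $D'$ is contained between two horizontal lines (its $x_2$-width is uniformly bounded), the one-dimensional Poincar\'e inequality in $x_2$ gives
\begin{equation*}
\|u\|_{L^2(D')} \le C\,\|\nabla u\|_{L^2(D')} \qquad \text{for every } u\in H^1_0(D').
\end{equation*}
Hence the bilinear form $a(u,v)=\int_{D'}\nabla u\cdot\nabla v\,\dd x$ is continuous and coercive on $H^1_0(D')$, while $v\mapsto -\int_{D'}fv\,\dd x$ is continuous on $H^1_0(D')$ when $f\in L^2(D')$. Lax--Milgram then produces a unique $u\in H^1_0(D')$ with $\|u\|_{H^1(D')}\le C\|f\|_{L^2(D')}$.

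For the $H^{s+2}$ estimate I would use a uniform local regularity argument. Choose a locally finite covering of $\overline{D'}$ by balls $\{B_i\}$ of fixed radius whose doubles $\{2B_i\}$ still have bounded overlap, and whose centres either lie in the interior or near the boundary $\Gamma'$. For the pure strip $D$ the geometry is translation invariant in $x_1$; for $\tilde D$ the boundary differs from that of $D$ only on a compact set, so away from that set the same translation invariance applies and near it only finitely many standard bounded-domain estimates are needed. In each ball the classical interior and boundary elliptic regularity estimates give
\begin{equation*}
\|u\|_{H^{s+2}(B_i\cap D')}\le C\bigl(\|f\|_{H^s(2B_i\cap D')}+\|u\|_{L^2(2B_i\cap D')}\bigr),
\end{equation*}
with a constant $C$ that can be taken independent of $i$. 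Summing over $i$ using the finite overlap and combining with Step~1 yields $\|u\|_{H^{s+2}(D')}\le C(\|f\|_{H^s(D')}+\|u\|_{L^2(D')})\le C\|f\|_{H^s(D')}$, which is~(\ref{E:poisdirichgn}). One can instead carry out the regularity step on $D$ by taking the Fourier transform in $x_1$: the equation reduces to a family of 1D two-point boundary problems $\p_{x_2}^2\hat u-\xi^2\hat u=\hat f$, $\hat u(\xi,\pm 1)=0$, for which the Green function provides explicit estimates on $\xi^j\p_{x_2}^k\hat u$ with $j+k\le s+2$, and Plancherel then gives the global bound.

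The main obstacle is uniformity of the constants across the unbounded domain: one must ensure that neither the Poincar\'e constant nor the local elliptic regularity constants degenerate as one moves out to $x_1=\pm\infty$. In the strip $D$ this is settled immediately by translation invariance, and in $\tilde D$ by the fact that the perturbation is compactly supported and the boundary remains smooth and uniformly non-degenerate. Uniqueness is automatic from Step~1 since any two solutions differ by an element of $H^1_0(D')$ on which the energy functional vanishes only at zero.
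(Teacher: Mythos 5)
Your proposal is correct, and the existence/uniqueness step coincides with the paper's (the paper invokes the Riesz representation theorem together with the same one-dimensional Poincar\'e inequality in $x_2$; with a symmetric form this is the same as Lax--Milgram). Where you genuinely diverge is the regularity step. The paper does not localize at all: it sets $z=\nabla^\perp u=(\p_2 u,-\p_1 u)$, observes that $\rot z=-f$, $\diver z=0$ and $z\cdot n=0$ (the latter because the tangential derivative of $u$ vanishes on $\Gamma'$ by the Dirichlet condition), and then applies Lemma~\ref{L:rotdivihamar} --- the global div--curl characterization of $H^s(D')$ proved in the Appendix --- to get $\|z\|_{s+1}\le C\|f\|_s$ in one stroke. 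This is economical because that lemma is needed elsewhere in the paper anyway (for the Neumann problem and for passing between the velocity and vorticity formulations of Euler). Your covering argument with uniform interior and boundary elliptic estimates, constants controlled by translation invariance in $x_1$ away from the compactly supported perturbation of $\tilde D$, is more elementary and self-contained, avoids the div--curl machinery entirely, and would extend to any uniformly regular unbounded domain; the price is the bookkeeping of the bounded-overlap covering and the iterated cutoff bootstrap needed to reduce the local right-hand sides to $\|u\|_{L^2(2B_i\cap D')}$. Your Fourier-transform alternative is fine for the flat strip $D$ (and is in the spirit of what the paper does for the decay of the Green function in Proposition~\ref{T:Green}), but, as you note, it does not cover $\tilde D$, so it cannot replace the localization there.
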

\begin{proof}The existence of the solution $u \in   H^1_0({D'}) $ is a consequence of the Riesz representation theorem.
Clearly, we have
\begin{align}\label{E:poisdirichgn11}
\|\nabla u\| ^2 \le C \|f\|\| u\| .
\end{align}
The Poincar\'{e}  inequality applied to $u(x_1,\cdot)$ gives
$$\|u\|\le C \|\p_2 u\|.$$
Combining this with (\ref{E:poisdirichgn11}), we obtain
\begin{align}\label{E:poisdirichgn1121}
\| u\|_{ 1  } \le C \|f\|.
\end{align}
To show the regularity of the  solution and estimate (\ref{E:poisdirichgn}), we need the following lemma.
\begin{lemma}\label{L:rotdivihamar} For any integer $s\ge 1$ we have
\begin{align*}
 H^s\!(D') \! =\!\{z\!\in\! L^2(D'):\, \!\rot z \!\in \!H^{s-1}(D'),\!\,\diver z\in
H^{s-1}(D'), \, z\!\cdot\! n\in H^{s-1/2}(\Gamma')   \},
\end{align*}
where $n$ is the outward unit normal vector on~$\Gamma'$.
Moreover,   any function $z \in  H^s(D')$ satisfies the inequality
$$ \|z\|_s \le C \left( \|z  \| +\|\rot z\|_{s-1}+\|\diver z\|_{s-1}+\|  z \cdot n\|_{s-1/2}\right),$$
where $C$ depends only on $s$.
\end{lemma}
The proof of this lemma is given in the Appendix. Let us denote
$z= \nabla^\bot  u:=(\p_2u,-\p_1u)$. Then $\rot z= -\Delta u=-f$,
$\diver z =0 $.  Notice that (\ref{0:21121}) implies that  $z\cdot
n =0$. It follows from Lemma~\ref{L:rotdivihamar} and inequality
(\ref{E:poisdirichgn1121})
 that
$z\in  H^{s+1}(D')$ and $\| z\|_{ {s+1}  } \le C \|f\|_s $.
Thus, we obtain $u \in  H^{s+2}(D')$ and (\ref{E:poisdirichgn}).
\end{proof}
Let us take $g\in H^1(D')$ and consider the Neumann problem for
the Poisson equation:
\begin{align}
\Delta u=\diver g\,\,\, &\text{ in}\quad D',\label{0:11}\\
\frac{\p u}{\p n} =g \cdot n\,\,\,&\text{ on} \quad
\Gamma'.\label{0:21}
\end{align}
We say that $u \in \dot H^1(D') $ is a solution of  (\ref {0:11}),
(\ref {0:21})  if for any $\theta \in  H^1(D') $ we have
\begin{align*}
\int_{D'} \nabla u \nabla \theta \dd x =\int_{D'} g \nabla \theta \dd x.
\end{align*}
\begin{proposition}\label{T:poisMaz} For any    integer $s\ge1$  and  $ g\in
H^{s}(D')$ problem (\ref {0:11}), (\ref {0:21}) has a unique
solution $u\in\dot H^{s+1}(D')  $. Moreover,
\begin{align} \label{0:Pro2.3ign}
\| u\|_{\dot H^{s+1}} \le C \|g\|_s .
\end{align}
\end{proposition}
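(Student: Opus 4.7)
The plan is to parallel the Dirichlet argument of Proposition \ref{T:poisMazDir}: first obtain a weak solution in $\dot H^1(D')$ by Riesz representation, then bootstrap to $\dot H^{s+1}$ by applying Lemma \ref{L:rotdivihamar} to the auxiliary vector field $z=\nabla u$.

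First I would establish existence and uniqueness in $\dot H^1(D')$. By Proposition \ref{P:Hikaruc}(i), the quotient $\dot H^1(D')$ is a Hilbert space with inner product $(u,v)\mapsto\int_{D'}\nabla u\cdot\nabla v\,\dd x$. For $g\in H^1(D')\subset L^2(D')$, the map $\theta\mapsto\int_{D'}g\cdot\nabla\theta\,\dd x$ is a bounded linear functional on $\dot H^1(D')$, of norm at most $\|g\|$, so the Riesz representation theorem yields a unique $u\in\dot H^1(D')$ satisfying
$$\int_{D'}\nabla u\cdot\nabla\theta\,\dd x=\int_{D'}g\cdot\nabla\theta\,\dd x$$
for every $\theta\in\dot H^1(D')$, and in particular for every $\theta\in H^1(D')$, which embeds naturally into $\dot H^1(D')$. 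Taking $\theta=u$ gives the base estimate $\|\nabla u\|\le\|g\|$.

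Second I would upgrade the regularity via Lemma \ref{L:rotdivihamar}. Set $z:=\nabla u\in L^2(D')$. Since $z$ is a gradient, $\curl z=0$. The equation gives $\diver z=\Delta u=\diver g$, which lies in $H^{s-1}(D')$ when $g\in H^s(D')$. Integration by parts in the weak formulation identifies the normal trace $z\cdot n=g\cdot n$ on $\Gamma'$, which is an element of $H^{s-1/2}(\Gamma')$ by the trace theorem. Lemma \ref{L:rotdivihamar} then delivers
\begin{align*}
\|\nabla u\|_s=\|z\|_s&\le C\bigl(\|z\|+\|\curl z\|_{s-1}+\|\diver z\|_{s-1}+\|z\cdot n\|_{s-1/2}\bigr)\\
&\le C\bigl(\|\nabla u\|+\|g\|_s\bigr)\le C\|g\|_s,
\end{align*}
which is precisely $\|u\|_{\dot H^{s+1}}\le C\|g\|_s$.

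The main subtlety lies in the first step: on the unbounded strip, constants do not belong to $L^2$, so the standard bounded-domain Neumann theory does not apply verbatim, and one must work in the quotient $\mathcal H^1/\mathbb R$ from the outset. Completeness of $\dot H^1(D')$ (Proposition \ref{P:Hikaruc}(i)) is exactly what makes the Riesz argument legitimate, and no Poincar\'e-type bound on $u$ itself is needed since the bilinear form is purely gradient based. Once the $\dot H^1$ solution is in hand, the regularity step is immediate, as Lemma \ref{L:rotdivihamar} packages precisely the three pieces of data ($\curl z$, $\diver z$, $z\cdot n$) that the Neumann setup naturally produces.
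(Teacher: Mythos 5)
Your proposal is correct and follows exactly the paper's route: existence and uniqueness in $\dot H^1(D')$ via the Riesz representation theorem (using the completeness from Proposition \ref{P:Hikaruc}(i)), followed by applying Lemma \ref{L:rotdivihamar} to $z=\nabla u$ with $\curl z=0$, $\diver z=\diver g$ and $z\cdot n=g\cdot n$ to obtain (\ref{0:Pro2.3ign}). The paper states this in two sentences; you have merely filled in the same details.
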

\begin{proof}
The Riesz representation theorem implies the existence of the
solution $u \in \dot   H^1(D') $. Lemma \ref{L:rotdivihamar}
applied to $z:=\nabla u$ gives (\ref{0:Pro2.3ign}).
\end{proof}

Now we consider the problem
\begin{align}
\Delta G_{a}=\p_1\delta_a\,\,\, &\text{ in}\quad \tilde D,\label{0:G1}\\
\frac{\p G_a}{\p n} = 0\,\,\,&\text{ on} \quad \p \tilde D,\label{0:G2}
\end{align}
where $\delta_a$ is the Dirac delta function concentrated at
$a=(a_1,a_2 )\in \tilde D   $.
\begin{proposition}\label{T:Green}
 Problem (\ref{0:G1}), (\ref{0:G2}) has a  solution
 $G_a~\!\in~\!C^\infty(\overline{\tilde D}~\!\setminus ~\! \{a\})$. Moreover, the following assertions hold:
 \begin{itemize}
   \item  [(i)]  For any open neighbourhood $Q$ of $a$ and for any integer $s\ge 1$, the
   solution $ G_{a}$ is uniquely determined by the additional condition that it belongs to $ \dot H^{s}(\tilde D\setminus \overline Q)$.
   \item   [(ii)]   For any $x\in \tilde D\setminus \{a\}$
 \begin{align}
 \nabla G_a(x )\!=\! -\!\frac{1}{2 \pi}
\left( \!\frac{|x-a|^2-2(x_1-a_1)^2}{|x-a|^4},\frac{-2(x_1-a_1)(x_2-a_2)}{|x-a|^4} \right)\!+\!
\psi_a(x) ,\label{0:grgnahat2}
\end{align}
where $\psi_a \in H^\infty(\tilde D)$.
  \item  [(iii)]Let $a\in \tilde D \setminus    \overline D $, then
    $G_a \in{\dot {H}}^\infty(   D  )$ and   for any integers $1~\!\le~\! i,j~\!\le~\!2 $  we have
 \begin{align}     \p_i\p_jG _a (x_1,x_2)\in\sS(D) .\label{0:grgnahat3}
\end{align}
 \item  [(iv)]  For any fixed $x\in\tilde D$ the function $ G_a (x)$  is analytic in $a\in \tilde D\setminus\{x
\}$.
    \end{itemize}
\end{proposition}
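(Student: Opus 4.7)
The plan is to construct $G_a$ explicitly by subtracting off the singularity of the planar fundamental solution and correcting the remainder via the Neumann solver of Proposition \ref{T:poisMaz}, and then to extract (i)--(iv) from this explicit representation.

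\textbf{Construction and smoothness.} Fix a cutoff $\chi\in C_0^\infty(\tilde D)$ equal to $1$ on a small closed ball $B(a,\varepsilon)\subset\tilde D$ and vanishing near $\p\tilde D$, and put
\[
V_a(x):=\frac{1}{2\pi}\chi(x)\,\p_1\log|x-a|.
\]
Then $V_a\in C^\infty(\overline{\tilde D}\setminus\{a\})$, its normal derivative on $\p\tilde D$ is zero, and $\Delta V_a=\pm\p_1\delta_a+F_a$ for some $F_a\in C_0^\infty(\tilde D)$. An integration by parts gives $\int_{\tilde D}F_a\,\dd x=0$, so a Bogovski\u{\i}-type construction yields $g_a\in C_0^\infty(\tilde D;\R^2)$ with $-\diver g_a=F_a$ and $g_a\cdot n=0$ on $\p\tilde D$. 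Applying Proposition \ref{T:poisMaz} at every order $s\ge 1$ produces $h_a\in\dot H^\infty(\tilde D)$ solving $\Delta h_a=-F_a$ with $\p h_a/\p n=0$. Setting $G_a:=\pm V_a+h_a$ (sign chosen to match $\p_1\delta_a$) gives a solution of (\ref{0:G1})--(\ref{0:G2}), and the interior/up-to-boundary elliptic theory makes $G_a\in C^\infty(\overline{\tilde D}\setminus\{a\})$.

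\textbf{Uniqueness (i) and the gradient formula (ii).} The difference of two candidate Green functions lying in $\dot H^s(\tilde D\setminus\overline Q)$ extends to a function $w$ harmonic on all of $\tilde D$ with Neumann boundary condition and $\nabla w\in H^{s-1}(\tilde D)$. Since $\tilde D$ coincides with $\R\times(-1,1)$ for $|x_1|>R_0$, a separation-of-variables expansion in the Neumann basis $\{\cos(n\pi(x_2+1)/2)\}_{n\ge 0}$ shows that every $n\ge 1$ mode is a combination of $e^{\pm n\pi x_1/2}$ whose coefficients must vanish so that $\nabla w\in L^2$ at both $x_1\ri\pm\infty$, and the $n=0$ mode reduces to $A_\pm+B_\pm x_1$ with $B_\pm=0$. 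Combined with Neumann unique continuation across the bump region, this forces $w\equiv\const$, proving (i). For (ii), on $B(a,\varepsilon)$ the construction gives $G_a=\pm\frac{1}{2\pi}\p_1\log|x-a|+h_a$, whose gradient is the explicit rational expression in (\ref{0:grgnahat2}); absorbing $\nabla h_a$ and the compactly supported smooth contribution from the transition annulus of $\chi$ into $\psi_a$ produces the desired decomposition with $\psi_a\in H^\infty(\tilde D)$.

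\textbf{Decay (iii) and analyticity (iv).} When $a\in\tilde D\setminus\overline D$, the singularity is outside $\overline D$, so $G_a$ is smooth and harmonic on $\overline D$ with Neumann boundary condition on the flat parts of $\p D$ (which comprise all of $\p D$ outside the bump). For $|x_1|>R_0$ the Neumann-strip expansion reads $G_a(x)=A_\pm+\sum_{n\ge 1}c_n^\pm e^{-n\pi|x_1|/2}\cos(n\pi(x_2+1)/2)$, so each $\p_i\p_j G_a$ decays exponentially in $|x_1|$; this is much stronger than the Schwartz decay required by $\sS(D)$. For (iv), fix $a_0\in\tilde D$ and a cutoff $\chi$ supported in a small ball around $a_0$ once and for all. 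For $a$ in a small complex neighbourhood of $a_0$ and $x\ne a_0$, $V_a(x)$ is manifestly holomorphic in $a$, and $F_a,g_a$ depend holomorphically on $a$ as elements of $C_0^\infty$; the linearity of the solution map furnished by Proposition \ref{T:poisMaz} then transports this holomorphy to $h_a$, so $G_a(x)$ is real-analytic in $a\in\tilde D\setminus\{x\}$.

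\textbf{Main obstacle.} The most delicate step is the Liouville-type rigidity underlying (i): the straight-strip Fourier analysis must be patched together with an interior argument on the bump region, since $\tilde D$ is not literally the flat strip. Controlling $\psi_a$ as a genuine $H^\infty(\tilde D)$ function (as opposed to merely smooth) also requires verifying that the explicit singular term in (\ref{0:grgnahat2}), extended beyond $B(a,\varepsilon)$, contributes an error lying in every $H^s$, which is automatic from its decay at infinity.
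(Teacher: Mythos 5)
Your construction of $G_a$ and your treatment of (ii), (iii) and (iv) run essentially parallel to the paper's proof: the paper also subtracts a cutoff of $\p_1$ of the fundamental solution and corrects with the Neumann solver of Proposition \ref{T:poisMaz}, and derives (iv) from the analyticity of $F_a$ plus linearity of the solution map. Two cosmetic differences: the paper uses $\p_1(\chi F_a)$ rather than $\chi\,\p_1 F_a$, which keeps the error term in the divergence form $\p_1 f$ and lets it apply Proposition \ref{T:poisMaz} with $g=(f,0)$ directly, so your Bogovski\u{\i} step is avoidable; and for (iii) the paper takes a global Fourier transform in $x_1$ and writes $\hat G_a$ explicitly, whereas you expand in the half-strips --- both yield the decay needed for $\sS(D)$.

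The genuine gap is in your uniqueness argument (i). On the end $x_1>R_0$ the mode $c_n^+e^{-n\pi x_1/2}\cos(n\pi(x_2+1)/2)$ has gradient in $L^2$, so the condition $\nabla w\in H^{s-1}$ does \emph{not} force $c_n^+=0$; only the exponential growing towards $+\infty$ is excluded (and symmetrically on the left end). Your expansion therefore only shows that $w$ tends to constants $A_\pm$ at infinity with exponentially decaying remainder; it does not show that $w$ is constant on the ends, so there is nothing for the continuation step across the bump to propagate (and ``Neumann unique continuation'' is not the right mechanism anyway --- zero Neumann data alone carries no unique-continuation content; what would work, \emph{if} the ends were constant, is simply the real-analyticity of harmonic functions). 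There is also a prior regularity point you skip: a priori $w$ lies in $\dot H^s$ only away from $\overline Q$, so one must first show $\nabla w\in L^2$ across the neighbourhood of $a$, which the paper does by a cutoff and interior elliptic regularity. Once $\nabla w\in L^2(\tilde D)$ globally, the clean finish is the one the paper uses: $w$ is a weak solution of the homogeneous Neumann problem, and the uniqueness in Proposition \ref{T:poisMaz} --- equivalently the energy identity $\int_{\tilde D}|\nabla w|^2=0$, justified by truncating in $x_1$ and using the boundedness of $w$ and the exponential decay of $\nabla w$ you already established --- gives $w=\const$. With that replacement the remainder of your argument stands.
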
\begin{proof} The existence of a solution $G_a~\!\in~\!C^\infty(\overline{\tilde D}~\!\setminus ~\! \{a\})$ will be established when proving assertion $(ii)$. To prove the uniqueness of the
solution, we assume that there are two solutions $G_{1,a}$ and
$G_{2,a}$. For $\tilde G=G_{1,a}-G_{2,a}$ we have
\begin{align*}
\Delta \tilde G=0\,\,\, &\text{ in}\quad \tilde D,\\
\frac{\p \tilde G}{\p n} = 0\,\,\,&\text{ on} \quad \p \tilde D,
\end{align*}
Let $\chi\in C_0^\infty(\tilde D)$ with $\chi =1$ in $Q$. Then
\begin{align*}
&\Delta (\chi \tilde G) =h,
\end{align*}
where $h\in C_0^\infty(\tilde D)$. The elliptic regularity for a
bounded domain implies that $\chi \tilde G\in H^\infty (\tilde
D)$. Since $\tilde G\in \dot H^s(\tilde D\setminus\overline Q)$,
we get $\tilde G \in \dot H^s (\tilde D)$. It follows from
Proposition \ref{T:poisMaz}  that $\tilde G=0$.

To prove    $(ii)$, we seek the solution in the form
\begin{align}\label{E.e.1ga}G_a= \p_1 (F_a\chi)+u_a,\end{align}
where $F_a(x)=-\frac{1}{2\pi} \ln|x-a|$ is the fundamental
solution of the Laplace operator in $\R^2$, $\chi\in C_0^\infty(
\tilde D)$, $\chi$ is $1$ in a neighborhood of $a$. Then $u_a$
must be the solution of the problem
\begin{align*}
\Delta u_{a}=-\p_1 (2\nabla F_a\cdot\nabla\chi+ F_a\Delta\chi):= \p_1 f\,\,\, &\text{ in}\quad  \tilde D, \\
\frac{\p u_a}{\p n}=0 \,\,\,&\text{ on} \quad \p \tilde D.
\end{align*}
Since $f\in C_0^\infty( \tilde D) $, applying  Proposition \ref{T:poisMaz}
 for $g=(f,0)$, we conclude that this problem has a   solution  $u_{a}\in ~\!H^{\infty }(\tilde D)$.
Property (\ref{0:grgnahat2}) follows from the construction of $G_a$.

Now let us show (\ref{0:grgnahat3}). We have that $G_a$ satisfies the
following problem in~$D$:
\begin{align}
\Delta G_{a}=0\,\,\, &\text{ in}\quad   D,\label{0:G112}\\
\frac{\p G_a}{\p n} = \ph\,\,\,&\text{ on} \quad \Gamma,\label{0:G212}
\end{align}
where $\ph \in C^\infty(\Gamma)$ and $\supp \ph\subset \overline \Gamma_0$. To show that the second derivatives
 of the solution belong to $\sS(D)$,
 let us  apply the Fourier transform in $x_1$ to  (\ref{0:G112}), (\ref{0:G212}). We obtain
\begin{align*}
 &\frac{d^2}{dx^2_2}\hat{G}_{a}-\xi^2\hat{G}_{a}  =0   \text{ in }    D, \\
&\frac{d \hat{G} _a}{dx_2}(\xi,-1) = \hat \ph_{1}(\xi),\\
&\frac{d \hat{G} _a}{dx_2}(\xi,1) = \hat \ph_2(\xi),
\end{align*}
where $\hat{G} _a$, $ \hat \ph_1$ and $\hat \ph_2$ are Fourier
transforms of $ {G} _a$, $   \ph(\cdot,-1)$ and $   \ph(\cdot,1)$, respectively. The solution
of this ODE is given by
\begin{align*}
\hat{G}_{a}(\xi,x_2) =  \frac{\hat \ph_2-\hat \ph_1}{2\xi\sinh(\xi)} \cosh(\xi x_2)+
\frac{\hat \ph_2+\hat \ph_1}{2\xi\cosh(\xi)} \sinh(\xi x_2).
\end{align*}
Since $\ph_1$ and $\ph_2$ are compactly supported, we have
 $$ \FF (   \p_i\p_jG _a ) \in \sS(D) ,  \quad 1\le i,j\le2,$$
 whence it follows that $ \p_i\p_jG _a \in S(D)$.
This completes the proof of $(iii)$.

Let  $\Omega $ be any   domain such that $  \overline\Omega\subset \tilde D$ and
 $\Omega \cap (\tilde D \setminus \overline D)\neq \emptyset$.
 Then
for any fixed $x\in\Omega$ the function $ G_a (x)$  is analytic in
$a\in \Omega\setminus\{x \}$. Indeed, let $\chi$ in
(\ref{E.e.1ga}) be  $1$ in $\Omega$. Then  the analyticity of $
G_a (x)$   is consequence of the facts that  $F_a$ is analytic in
$a$ and $u_a$ is a linear operator in $F_a$. Since $ G_a $ is the
unique solution of (\ref{0:G1}), (\ref{0:G2}), we have  the
analyticity of $ G_a (x)$ in $\tilde D\setminus \{ x\}$.
\end{proof}

 \subsection {Euler equations in an unbounded strip}
 We consider the
incompressible Euler system:
\begin{align}
\dot{u}+\langle u,\nabla\rangle u+\nabla p&=0,\quad  \diver u=0  \quad\text{in }
D,
\label{0:1}\\
u\cdot n&=0\quad \text{ on }  \Gamma  ,\label{0:3}\\
u(x,0)&=u_0(x),\label{0:4:init}
\end{align}
 It is well known that if $D$ is a bounded domain or if $D=\R^2$, then problem (\ref{0:1})-(\ref{0:4:init}) is
 well posed  in various function spaces (e.g., see  \cite{Kato2, KatoPonce, Wol}).

 In this subsection, we study the well-posedness of Euler system in $D$ defined by (\ref{0:cyl}).
\begin{definition} For any integer $s\ge3$ we say that $ (u,p)$ is a solution of Euler system if $(u,p) \in C(J_{T }, H^{s }(D))\times C
(J_{T },  \dot H^{s+1} (D)) $ and (\ref{0:1})   is satisfied in
the sense of distributions.
 \end{definition}

Let us show that the Euler system is  equivalent to the  problem
\begin{align}
&\,\dot{w}+\langle u,\nabla\rangle w =0, \quad w(x,0)= \rot
u_0(x),
\label{0:1ham}\\
&\rot u=w , \quad\diver u=0, \quad  u\cdot n|_\Gamma=0
\label{0:ham3} .
\end{align}
Clearly, if $(u,p) $ is a solution of the Euler system, then
(\ref{0:1ham}), (\ref{0:ham3})   hold. Now let us show that to any
solution $$(u,w) \in C(J_{T }, H^{s }(D))\cap C^1(J_{T }, H^{s-1
}(D))\times C (J_{T },   H^{s-1} (D)) $$   of (\ref{0:1ham}),
(\ref{0:ham3}) there corresponds a unique  solution
 $(u,p) \in C(J_{T }, H^{s }(D))\times C
(J_{T },  \dot H^{s+1} (D)) $  of  (\ref{0:1})-(\ref{0:4:init}). From  (\ref{0:1ham}) and (\ref{0:ham3}) it follows that
$$ \rot (\dot{u}+\langle u,\nabla\rangle u)=0.$$
Hence, there exists $p \in C (J_{T },   \dot H^{s} (D)) $ such
that $ -\nabla p = \dot{u}+\langle u,\nabla\rangle u$. It is easy
to see that
\begin{align*}
-\diver \nabla p&=\diver (\langle u,\nabla\rangle u)= \sum _{i,j=1}^2 \p_i u_j\p_j u_i \in H^{s-1} ,\,\, \rot \nabla p =0,\\
-\frac{\p p}{\p n}&= (\langle u,\nabla\rangle u) \cdot n=
 \langle u,\nabla\rangle   (u\cdot  \tilde n) -\sum _{i,j=1}^2    u_j u_i \p_j \tilde n_i \\&=-\sum _{i,j=1}^2    u_j u_i \p_j \tilde n_i \in H^{s-1/2},
\end{align*}
where $\tilde n$ is   a  regular extension of $n$. Thus, it
follows from Lemma \ref {L:rotdivihamar}  that $ \nabla p \!~\in
~\! C (J_{T }, H^{s}(D))$, whence we conclude that $p\in C (J_{T
},   \dot H^{s+1} (D))$ .

We have the  following result on the local well-posedness of Euler
system. The ideas used in the proof of existence of a solution
play an important role in the study of stabilization problem (see
Section \ref{S:2}). Therefore we present a rather complete  proof,
even though we do not really need this result.
\begin{theorem}\label{T:wp}
Let $s \ge 4$. For any $u_0\in H^s(D) $ satisfying the conditions
\begin{align*}
&\diver u_0 =0,\\
&u_0\cdot n =0   \text{ on }  \Gamma,
\end{align*}
there is   $T_*=T_*(\|u_0\|_s)$ such that system
(\ref{0:1})-(\ref{0:4:init}) has a unique solution $(u,p)\in
C(J_{T{_*}}, H^{s }(D))\times C (J_{T{_*}},   \dot H^{s+1} (D)) $.
\end{theorem}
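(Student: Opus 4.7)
The plan is to exploit the equivalence, just established, between (\ref{0:1})--(\ref{0:4:init}) and the vorticity--div--curl system (\ref{0:1ham})--(\ref{0:ham3}), and construct $(u,w)$ by a Picard iteration that decouples the transport step from the elliptic reconstruction of $u$ from $w$. The pressure will then be recovered exactly as in the argument preceding the theorem.

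\textbf{Iteration.} Set $u^{0}(t,x):=u_{0}(x)$. Given $u^{n}$ with $u^{n}\cdot n=0$ on $\Gamma$, let $w^{n+1}$ solve the linear transport problem
\begin{align*}
\dot w^{n+1}+\langle u^{n},\nabla\rangle w^{n+1}=0,\qquad w^{n+1}(0)=\curl u_{0}.
\end{align*}
Since $s\ge 4$, $u^{n}(t)\in H^{s}(D)\hookrightarrow C^{1}_{b}(\overline D)$, so its ODE flow $\Phi^{n}_{t}$ is well defined, and the impermeability condition $u^{n}\cdot n=0$ on $\Gamma$ forces $\Phi^{n}_{t}$ to preserve $\overline D$; thus $w^{n+1}(t,x)=(\curl u_{0})(\Phi^{n}_{-t}(x))$. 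I reconstruct $u^{n+1}$ via a stream function: solve the Dirichlet problem $-\Delta\psi^{n+1}=w^{n+1}$, $\psi^{n+1}|_{\Gamma}=0$ using Proposition \ref{T:poisMazDir}, and set $u^{n+1}:=\nabla^{\bot}\psi^{n+1}$. Then automatically $\diver u^{n+1}=0$, $\curl u^{n+1}=w^{n+1}$, and $u^{n+1}\cdot n=0$ on $\Gamma$ (since $\psi^{n+1}$ is constant on each boundary line). Consistency with the initial condition on the first iterate follows from the fact that $u_{0}\in L^{2}(D)$ with $\diver u_{0}=0$ and $u_{0}\cdot n=0$ has zero flux across $\{x_{1}=\mathrm{const}\}$ and hence admits a stream function vanishing on both boundary lines.

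\textbf{Uniform bounds.} Proposition \ref{T:poisMazDir} gives $\|u^{n+1}(t)\|_{s}\le C\|w^{n+1}(t)\|_{s-1}$. Because $s\ge 4$, the embedding $H^{s-1}(D)\hookrightarrow C^{1}_{b}(\overline D)$ is available, and a standard commutator (Kato--Ponce/Moser) estimate applied to the transport equation yields
\begin{align*}
\tfrac{d}{dt}\|w^{n+1}(t)\|_{s-1}\le C\|u^{n}(t)\|_{s}\,\|w^{n+1}(t)\|_{s-1}.
\end{align*}
Setting $M_{n}(t):=\|u^{n}(t)\|_{s}$, one obtains the recurrence $M_{n+1}(t)\le C\|\curl u_{0}\|_{s-1}\exp\bigl(C\int_{0}^{t}M_{n}\bigr)$, and a Gronwall--type comparison produces $T_{*}=T_{*}(\|u_{0}\|_{s})>0$ and a bound $M>0$ such that $\sup_{n}\sup_{t\in J_{T_{*}}}M_{n}(t)\le M$.

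\textbf{Convergence, uniqueness, closure.} Writing $\de u^{n}:=u^{n+1}-u^{n}$, $\de w^{n}:=w^{n+1}-w^{n}$ and subtracting, one finds
\begin{align*}
\dot{\de w}^{n}+\langle u^{n},\nabla\rangle\de w^{n}=-\langle \de u^{n-1},\nabla\rangle w^{n},
\end{align*}
which, combined with $\|\de u^{n}\|_{s-1}\le C\|\de w^{n}\|_{s-2}$, gives an $H^{s-2}$ estimate of the shape $\tfrac{d}{dt}\|\de w^{n}\|_{s-2}\le C(M)\bigl(\|\de w^{n}\|_{s-2}+\|\de w^{n-1}\|_{s-2}\bigr)$; shrinking $T_{*}$ if necessary yields a strict contraction in $C(J_{T_{*}},H^{s-1}(D))$. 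The limit solves (\ref{0:1ham})--(\ref{0:ham3}); the Step 2 bound upgrades membership to $L^{\infty}(J_{T_{*}},H^{s}(D))$, and strong continuity in the $H^{s}$--topology follows from a Bona--Smith type regularization argument on the vorticity equation. Uniqueness is obtained by the same difference estimate applied to two solutions, and the pressure $p\in C(J_{T_{*}},\dot H^{s+1}(D))$ is then recovered exactly as in the discussion preceding the theorem. The main technical obstacle is the unboundedness of $D$: one has to verify both that the stream function is uniquely well defined in the strip (which is precisely where Proposition \ref{T:poisMazDir} is essential, together with the zero--flux argument above) and that the characteristic flow of each iterate preserves $\overline D$ (which reduces to the impermeability condition) — once these two geometric points are secured, the classical Kato scheme goes through unchanged.
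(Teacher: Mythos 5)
Your proposal follows essentially the same route as the paper: a Kato-type iteration alternating a linear transport step for the vorticity with an elliptic div--curl reconstruction of the velocity (the paper's Lemma \ref{L:curldiv} is exactly your Dirichlet stream-function construction, and your zero-flux observation for $L^2$ fields in the strip is the right justification), uniform $H^{s}$ bounds by Gronwall, and contraction in a lower norm. The one organizational difference worth noting is where the Bona--Smith regularization enters: you run the iteration with the fixed initial vorticity $\curl u_0$, obtain the limit in a lower topology, and then defer the upgrade to continuity in $H^{s}$ to ``a Bona--Smith type argument''; the paper instead mollifies the initial data \emph{inside} the iteration (setting $w^{m+1}(0)=\rot J_{m+1}u_0$), so that $\|w^m\|_{s}\le Cm$ can be traded against $\|u_0^m-u_0^k\|_1=o(m^{-(s-1)})$ by interpolation, and the iterates converge directly in $C(J_{T_*},H^{s-1})$ for the vorticity, i.e.\ $C(J_{T_*},H^{s})$ for the velocity. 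Be aware that this deferred step is not a routine citation here --- it is the bulk of the paper's proof (Steps 1, 3 and 4), and in the unbounded strip it requires the auxiliary low-norm estimate (\ref{E:Kigna12454sd2}) obtained by iterating the integral inequality. Finally, your uniqueness argument at the vorticity level is legitimate (given the equivalence with (\ref{0:1ham})--(\ref{0:ham3}) established before the theorem) and in fact sidesteps the one genuinely strip-specific point in the paper's uniqueness proof, namely the cutoff argument showing $\int_D\nabla p\cdot v\,\dd x=0$ when $p$ is only known to lie in $\dot H^{s+1}(D)$ and may grow linearly at infinity; if you instead ran the energy estimate on the velocity, you would need that argument.
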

\begin{proof}
\textbf{Uniqueness.} To prove the uniqueness, we argue  as in the
case of bounded domain. We assume that there are two solutions
$u_1$ and $u_2$. Then  for $v=u_1-u_2$, we have
\begin{align}
&\dot{v}+\langle u_1,\nabla\rangle v+\langle v,\nabla\rangle
u_2+\nabla p=0, \label{0:tarb1}\\
& \diver v=0
 , \quad  v\cdot n|_{\Gamma}=0   ,
 \quad
v(x,0)=0.\nonumber
\end{align}
Multiplying (\ref{0:tarb1}) by $v$ and integrating over $D$, we
get
\begin{align} \label{0:tarb2}
\p_t\|v(\cdot,t)\|^2\le-\int_D \langle u_1,\nabla\rangle v\cdot v
\dd x +C\|v(\cdot,t)\|^2-\int_D \nabla p \cdot v \dd x,
\end{align}
where $C  > 0$ is a constant depending only on $u_2$. Since
$u_1\cdot n=0$, the first term on the right-hand side of
(\ref{0:tarb2}) is zero. Let us show that the last term  is also
zero. Let us denote
 \begin{align*} \Omega_{(R)}:=\{ x\in D: \, |x_1|< R\},
\end{align*}
and let $\chi\in C ^\infty(\overline D)$ such that
\begin{align*}\chi(x)=
\left\{
  \begin{array}{ll}
    0, & \hbox{\text{if} $x\notin \overline\Omega_{(2)}$,} \\
    1, & \hbox{\text{if} $x\in\Omega_{(1)}$.}
  \end{array}
\right.
\end{align*} Clearly, we have
 \begin{align*}\lim_{R\ri\infty}\int_D \chi(\frac x R)\nabla p(x) \cdot v(x) \dd x=\int_D  \nabla p(x) \cdot v(x) \dd
 x.
\end{align*}
On the other hand, integrating by parts, we obtain
 \begin{align*} \int_D \chi(\frac x R)\nabla p(x) \cdot v(x) \dd x=-\int_{\Omega_{(2R)}\setminus\Omega_{(R)}}  \nabla\chi(\frac x R)  \frac{p(x)}{R} \cdot v(x) \dd
 x.
\end{align*}
Since $p\in \dot H^{s+1}$, from assertion $(iii)$ of Proposition
\ref{P:Hikaruc} we have $$\sup_{x\in \Omega_{(2R)}}
|\frac{p(x)}{R}|<C,$$ where $C$ does not depend on $R$.   Thus,
dominated convergence theorem yields
 \begin{align*}\int_D  \nabla p(x) \cdot v(x) \dd
 x=0.
\end{align*}
Applying the Gronwall inequality to (\ref{0:tarb2}), we obtain
$v=0$.

 \textbf{Existence.}
  To prove the existence of the
 solution, we shall need  the following result.
 \begin{lemma}\label{Lemma1}   Let $\tilde u\in C(\R_+, H^s)$,  $\tilde u \cdot n |_{\Gamma \times
\R_+}=0$, $f\in C(\R_+, H^{s})$ and $w_0\in~ H^{s}$,
$s\ge 3$. Then    the problem
\begin{align}
\p_t w  + \langle \tilde u ,\nabla\rangle w& =f,\label{0:transa}\\
w(x,0)&=w_0,\label{0:transb}
 \end{align}
 has a unique solution $w\in C(\R_{+}, H^{s})$, which satisfies
 the inequality
\begin{align}
\|w(\cdot, t)\|_{s} \le  \|w_0\|_{s}  +\int_0^t \left( \|f(\cdot,
\tau)\|_{s} + C\|w(\cdot, \tau)\|_{s}\|\nabla\tilde u (\cdot,
\tau)\|_{s-1}\right)\dd \tau .\label{00:lemma1ign}
 \end{align}
 \end{lemma}
 \begin{proof}
Let us denote by
$\phi^{g}:\tilde D\times\R_+\ri \tilde D$ the flow
associated to $g$, i.e., the solution of the problem
\begin{align*}
\frac{\p\phi^{ g} }{\p t}&=g(\phi^{g},t), \\
\phi^{  g}(x,0)&=x.
\end{align*}
Since (\ref{0:transa}), (\ref{0:transb}) is an inhomogeneous
transport equation, its solution is given by
$$w(\phi^{\tilde u}(x,t),t)=w_0(  x  )+\int_0^t f(\phi^{\tilde u}(x,\tau),\tau)\dd \tau. $$
Let us derive formally inequality
 (\ref{00:lemma1ign}). Taking the
$\p^\al:=\frac{\p^\al}{\p x^\al},$ $|\al|\le s$ derivative of
(\ref{0:transa})   and multiplying the resulting equation by
$\p^\al {w}$, we get
\begin{align*} \frac{1}{2}\frac{\dd}{\dd t}\|\p^\al{w}\| ^2  =&  \int_D \p^\al f\p^\al w \dd x
  - \int_D \p^\al\left( \tilde u \cdot\nabla\right)
{w} \cdot\p^\al {w}\dd  x
 \\
 \le&\left|\int_D(\tilde u \cdot\nabla) \p^\al{w}\cdot\p^\al {w}\dd
 x\right|+ \|f\|_{s} \|{w}\|_{s} +C\|\nabla{\tilde u}\|_{s-1} \|{w}\|_{s}^2.
  \end{align*}
Integrating by parts, one verifies that the first integral in the
right-hand side vanishes. Integrating in time, we obtain
(\ref{00:lemma1ign}).
 \end{proof}
 \begin{lemma}\label{L:curldiv} Let $w \in H^s, s\ge0$. Then the problem
  \begin{align}
{\rot z}&=w,\label{0E:dfdf}\\
\diver z&=0,\label{0E:dfdf2}\\
z\cdot n|_\Gamma&=0\label{0E:dfdf3}
\end{align}has a unique solution $z\in H^{s+1}$.
Moreover, there is $C>0$  depending only on  $s$ such that
  \begin{align}
\|z\|_{s+1 }&\le C \| w\| _{s }.\label{0E:dfdf4}
\end{align}
 \end{lemma}
\begin{proof}
Let us consider the following Dirichlet problem for the
Poisson equation:
\begin{align*}
\Delta v=  w\,\,\, &\text{ in}\quad D, \\
{v} =0 \,\,\,&\text{ on} \quad
\Gamma.
\end{align*} By Proposition \ref{T:poisMazDir}, $v\in H^{s+2}$ and $\|v\|_{s+2}\le C \| w\| _{s}$. Then
for $z=- \nabla ^\bot v$ properties
(\ref{0E:dfdf})-(\ref{0E:dfdf4}) are satisfied.
  \end{proof}

We now return to the proof of the theorem. The proof is based on
some ideas from  \cite{Bardfrisch} and \cite{BNSM}.

  \vspace{6pt}  \textbf{Step 1.} Let
$$E:H^k(D)\ri H^k(\R^2),\quad 0\le k\le s+1$$ be an extension
operator. Let $ \rho\in \sS(\R^2)$  be the function such that
\begin{align*}
\hat\rho(\xi)&=
\begin{cases}
\exp(-\frac{|\xi|^2}{1-|\xi|^2}) & \quad |\xi|<1, \\
0 & \quad |\xi|\ge1.
\end{cases}
\end{align*} Define $J_m:H^s(D)\ri
H^{s+1}(D)$ by
\begin{align}
J_m (v):=  (m^2\rho(m x)* E(v))|_D. \label{E:extoper}
\end{align}
For $u_0\in H^s(D)$ we define $ u^m_0 := J_m (u_0)$. Then
\begin{align}
&u^m_0\ri u_0 \text{ in } H^{s}(D),\quad \|u^m_0\|_s\le C\|u_0\|_s, \quad \|u^m_0 \|_{s+1}\le m C\|u_0\|_s , \label{E:2:regul1}\\
&\|u^m_0-u^k_0 \|_s=o(1)\,\text{ and }\|u^m_0-u^k_0
\|_1=o(\frac{1}{m^{s-1}})\text{ as } m\ri
\infty,\label{E:2:regul2}
\end{align}
where (\ref{E:2:regul2}) holds uniformly in   $k>m$. Using Lemmas
\ref{Lemma1} and \ref{L:curldiv}, we define the sequences $ u^m\in
C(\R_+,H^{s+1}) $ and  $ w^m\in C(\R_+,H^{s}) $  by
\begin{align*}
u^0= u_0,\\
\p_t w^{m+1} + \langle u^m,\nabla\rangle w^{m+1}=0, \quad w^{m+1}(0)=\rot u^{m+1}_0, \\
\curl u^{m+1}=w^{m+1}, \quad \diver u^{m+1}=0, \quad u^{m+1}\cdot
n|_{\Gamma }=0.
\end{align*}
 Our strategy is to show that sequence $u^m$ is convergent  and the
limit is the solution of Euler system. From (\ref{00:lemma1ign})
we derive
\begin{align}
\|w^{m}(\cdot, t)\|_{i} \le  \|\rot u^{m}_0\|_{i}  +C_1 \int_0^t
\| w^{m}(\cdot, \tau)\|_{i}\|   u^{m-1 }(\cdot, \tau)\|_{i} \dd
\tau\label{E:2.6-iapa152}
 \end{align}
for $i=s-1,$ $s$.

  \vspace{6pt}  \textbf{Step 2.} In this step, we show that  there exists a time $T_*=T_*(\|u_0\|_s)$ such
 that for any $t\in J_{T_{*}}$
\begin{align}
\|w^{m}(\cdot, t)\|_{s-1} \le  C \| u^{m}_0\|_s,\quad
\|w^{m}(\cdot, t)\|_{s} \le  C \| u^{m}_0\|_{s+1} \le mC
\|u_0\|_s. \label{E:2.6-iapa151}
 \end{align}
 By induction, let us prove for $i=s-1,s$ the inequality
\begin{align}
\|w^{m}(\cdot, t)\|_{i}\le  y_m(t),\label{E:2.6-indu}
 \end{align}
 where $C$ does not depend on $m$ and $y_m(t)$ is the solution of
\begin{align}
\dot y_m=C_1 y_m^2,\quad y_m(0)=\|\rot
u^{m}_0\|_{i}.\label{E:2.6-indu2}
 \end{align}
 Clearly (\ref{E:2.6-indu}) holds for $m=0$ for a sufficiently large $C$. Assume that it holds also
for $m-1$ and let us prove it for $m$. From the construction of $\hat
\rho$ we have $\| u^{m-1}_0\|_i\le \| u^{m}_0\|_i $, hence $
y_{m-1}\le y_m$.
 Thus, from (\ref{E:2.6-iapa152}), (\ref{E:2.6-indu2}) and induction hypothesis,  we have
\begin{align*}
\|w^{m}(\cdot, t)\|_{i} -y_m&\le C_1 \int_0^t (\| w^{m}(\cdot,
\tau)\|_{i}\|   u^{m-1 }(\cdot, \tau)\|_{i}-y_m^2) \dd \tau\\&\le
C_1 \int_0^t y_m(\| w^{m}(\cdot, \tau)\|_{i}-y_m) \dd \tau.
 \end{align*}
Inequality  (\ref{E:2.6-indu}) follows from the Gronwall
inequality. It is easy to see that  (\ref{E:2.6-indu}) yields
 (\ref{E:2.6-iapa151}).

  \vspace{6pt}  \textbf{Step 3.}
 Now let us show that   $ w^m$   converges in $C( J_{T{_*}},H^{s-1})$. In  view of Lemma \ref{L:curldiv},  sequence  $u^m$  converges in $C( J_{T{_*}},H^{s})$
and the limit $u$ is  the solution of Euler problem.

Notice that for $m<k$ we have
\begin{align}
\p_t\left(w^m-w^k\right)&+\langle  { u}^{k-1},\nabla\rangle
\left(w^m-w^k\right) =\langle   { u}^{k-1}-  {
u}^{m-1},\nabla\rangle w^m . \label{E:2.6-iapa}
\end{align}
Denote  $ K ^{m,k}(t  ):= \|w^m(\cdot, t  )-w^k(\cdot, t)
\|_{s-1 }$. Lemma \ref{Lemma1} implies
\begin{align}
  K ^{m,k}( t)  \le \|u^m_0-u^k_0 \|_s&+ C \int_0^ t    \big(  K ^{m,k}  ( \tau  ) \|   { u}^{k-1}(\cdot,
\tau )\|_{s -1}  \nonumber\\& +  \|u^{m-1}(\cdot, \tau   )-u^{k-1}(\cdot, \tau )
\|_{s-1 }\|w^m( \cdot, \tau)\|_{s}          \big)\dd \tau.\label{E:Kigna122}
\end{align}
On the other hand,
\begin{align}
&\|w^m \|_{s} \le Cm, \quad \|u^{m-1} -u^{k-1} \|_{s-1 } \le
\|u^{m-1} -u^{k-1} \|_1^{\frac{1}{s-1}}\|u^{m-1} -u^{k-1}
\|^{\frac{s-2}{s-1} }_{s}  .\label{E:Kigna12298}
\end{align}
Assume for a moment that
 \begin{align}
U^{m,k}:=\|w^{m-1} -w^{k-1} \|\le o(\frac{1}{m^{s-1}})
\label{E:Kigna12454sd2}.\end{align} Substituting
(\ref{E:Kigna12298}) into (\ref{E:Kigna122}) and using
(\ref{E:2:regul2}) and  (\ref{E:Kigna12454sd2}), we obtain
\begin{align*}
  K ^{m,k}( t) \le o(1)+ C \int_0^ t    \big(  K ^{m,k}  ( \tau  ) \|   { u}^{k-1}(\cdot,
\tau )\|_{s -1} \big)\dd \tau.
\end{align*}
Using the Gronwall inequality, we obtain the convergence of
  $w^m$   in $C(J_{T{_*}}, H^{s -1}(D))$.

  \vspace{6pt}  \textbf{Step 4.}
To complete the proof of the theorem, it remains to show
(\ref{E:Kigna12454sd2}). Taking the scalar product of
(\ref{E:2.6-iapa}) with $w^m-w^k$ in $L^2$, we get
\begin{align*}
  U ^{m,k}( t)  \le C \|u^m_0-u^k_0 \|_1+ C \int_0^ t     &U ^{m-1,k-1}  ( t_1  )    \dd t_1.
\end{align*}
Iterating this inequality, one deduces
\begin{align}
  U ^{m+p,k+p}( t)   \le& C\|u^{m+p}_0-u^{k+p}_0 \|_1+ C \int_0^ t    U ^{m+p-1,k+p-1}  ( t_1 )    \dd t_1 \nonumber
\\ \le& C \|u^{m+p}_0-u^{k+p}_0 \|_1+ C \int_0^ t  \big(  C \|u^{m+p-1}_0-u^{k+p-1}_0 \|_1\nonumber
\\& + C \int_0^ {t_1}    U ^{m+p-2,k+p-2}  ( t_2 ) \big)   \dd t_2   \dd t_1 \nonumber
\\ \le&   C\|u^{m+p}_0-u^{k+p}_0 \|_1+ C \int_0^ t \big(C \|u^{m+p-1}_0-u^{k+p-1}_0\| _1 \nonumber
\\&  +\cdots+
C \int_0^{ t_{p-1}}  \big(C \|u^{m+1}_0-u^{k+1}_0 \|_1+ C \int_0^
{t_p} U ^{m,k}  ( t_p )   \big )\big)\dd t_p \cdots  \dd t_2   \dd
t_1.\nonumber
\end{align}
Hence, for any $t\in J_{T_*}$ we obtain
\begin{align}
  U ^{m+p,k+p}& \le \sum_{j=1}^p  \frac{C^{p-j+1} {T_*}^{p-j}}{(p-j)! } \|u^{m+j}_0-u^{k+j}_0 \|_1+
\frac{C^{p +1} {T_*}^{p }}{{p }! } \max_{t\in
  [0,T_*]}U^{m,k} .\label{E:Kidfgnzdfa122}
\end{align}
Since  $$ \sum_{j=1}^\infty  \frac{C^{j+1} {T_*}^{j}}{j! } <\infty,$$
inequalities (\ref{E:2:regul2}) and (\ref{E:Kidfgnzdfa122})    imply (\ref{E:Kigna12454sd2}).

\end{proof}
\begin{remark}\label{rem:2.6} We have the following assertions:
 \begin{itemize}
                                \item  Adapting the Beale--Kato--Majda criterion
  (see \cite{BKM}) for an unbounded strip, one can prove that the solution of (\ref{0:1})-(\ref{0:4:init}) is global in
  time. However, we shall not need this result.
                                \item  Let us take any non-zero function $g\in H^{s-1/2}(\Gamma)$.
If   the homogeneous boundary condition (\ref{0:3}) is replaced by
$u\cdot n|_{\Gamma  }=g$, then, to our knowledge, neither existence
nor uniqueness of a solution is known to hold (even in the case of
   bounded domain).
                              \end{itemize}

\end{remark}

 \section{Main result} \label{S:2}
Let $D$  and $\Gamma_0$ be defined by (\ref{0:cyl}) and (\ref{0:gammaisah1}). Consider the Euler system:
\begin{align}
\dot{u}+\langle u,\nabla\rangle u+\nabla p=0&\qquad \text{in}\quad
D\times (0,\infty),\label{1:1}\\ \diver u=0
 &,
\label{1:2}\\
u\cdot n=0&\qquad \text{on}\quad \Gamma\setminus\Gamma_0\times \R_+,\label{1:3}\\
u(x,0)=u&_0(x).\label{1:4:init}
\end{align} For any integer $s$ we denote
$$ \XX^s(D)= C(\R_+, C_b (\overline D)\cap \dot H^s(D) ),  $$
and $ \langle
x_1\rangle:=(1+x_1^2)^{1/2}$.  The following theorem is our main result.
\begin{theorem}\label{T:himnth} For any constants $\al,\beta>0$, $c\in \R$ and integer $s\ge 4$, for any initial data $u_0\in H^s(D)$  such that
\begin{align}
&\diver u_0 =0,\label{1:divpaym}\\
&u_0\cdot n =0   \text{ on }  \Gamma\setminus\Gamma_0\label{1:ezrpaym},\\
&\| \exp (\al\langle x_1\rangle^{2+\beta})  \rot u_0(x_1,x_2) \|_{s-1 }<\infty\label{1:expdecay}
\end{align}
 there is a solution $(u,p) \in
\XX^{s }(D) \times C  (\R_+, \dot H^{s}(D)) $  of
(\ref{1:1})-(\ref{1:4:init})  with
\begin{align}\label{1:karavardy}
 \lim_{t \rightarrow \infty }( \|u(\cdot,t)-(c,0)\|_{L^\infty(D)}+\| \nabla u(\cdot,t)\|_{s-1 } +\| \nabla p(\cdot,t)\|_{s-1 } )=0.
 \end{align}
\end{theorem}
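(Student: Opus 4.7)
The plan is to implement Coron's return method adapted to the unbounded strip. First I would replace the Euler system by its vorticity formulation: find $u$ solving
\begin{align*}
&\dot w + \langle u,\nabla\rangle w = 0, \qquad w|_{t=0} = \curl u_0,\\
&\curl u = w,\quad \diver u = 0,\quad u\cdot n|_{\Gamma\setminus\Gamma_0}=0,
\end{align*}
with $u(\cdot,t)\to (c,0)$ at infinity. By the reduction carried out just before Theorem~\ref{T:wp} and by a curl--div inversion in the spirit of Lemma~\ref{L:curldiv} (with the boundary condition relaxed on $\Gamma_0$), such a $u$ produces an Euler solution with the required pressure. Since vorticity is transported along particle trajectories, the stabilization target $(c,0)$ is irrotational, so the task reduces to manufacturing a velocity field whose flow pushes the entire initial vorticity out of $D$ through $\Gamma_0$ in infinite time.

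Next I would construct a smooth reference profile on $\tilde D$ of the form
\[
\bar u(x) = (c,0) + \sum_{j=1}^{N}\lambda_j \nabla G_{a_j}(x),
\]
with poles $a_j \in \tilde D\setminus \overline D$ chosen in the two bumps above $(a,b)$ and below $(a+d,b+d)$, and real coefficients $\lambda_j$. By Proposition~\ref{T:Green}, each $\nabla G_{a_j}$ is smooth on $\overline D$, satisfies $\nabla G_{a_j}\cdot n = 0$ on $\Gamma\setminus\Gamma_0$, and its derivatives belong to $\sS(D)$; hence $\bar u - (c,0)$ and all its derivatives decay faster than any polynomial in $|x_1|$. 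Using the explicit leading term in (\ref{0:grgnahat2}) and choosing the $\lambda_j$ so that the monopole/first moment contributions cancel the linear growth allowed by Proposition~\ref{P:Hikaruc}(iii), I would arrange $|\bar u(x)-(c,0)| = o(|x_1|^{-2})$. The poles and coefficients are then selected, by a degree-of-freedom count and an open-density argument in the analytic dependence of $G_a$ on $a$ (Proposition~\ref{T:Green}(iv)), so that the flow of $\bar u$ enjoys property~$(P)$: for every compact ball $B\subset\overline D$ there is a time after which $\phi^{\bar u}(B,\cdot)$ has left $D$ through $\Gamma_0$, and this property persists under small $W^{1,\infty}$ perturbations of $\bar u$.

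With such $\bar u$ in hand I would run an inductive construction on a tiling $\{B_i\}_{i\ge 1}$ of $\overline D$ by closed balls and a sequence $0=T_0<T_1<T_2<\cdots\to\infty$ such that the flow of $\bar u$ carries $B_1\cup\cdots\cup B_i$ out of $D$ through $\Gamma_0$ before $T_i$. Decompose the initial vorticity $w_0 = \curl u_0 = \sum_{i\ge 1} w_0^{(i)}$ with $\supp w_0^{(i)}\subset B_i$; the weighted bound (\ref{1:expdecay}) gives $\|w_0^{(i)}\|_{s-1} \le C\exp(-\alpha\langle i\rangle^{2+\beta})$, which is summable after any polynomial amplification. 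On each interval $[T_{i-1},T_i]$, I would solve the Euler system on $\tilde D$ with initial vorticity equal to the residual $\sum_{j\ge i} w_0^{(j)}$ transported up to time $T_{i-1}$, using a velocity field that is a small perturbation of $\bar u$ obtained from Theorem~\ref{T:wp} applied on $\tilde D$; the piece $w_0^{(i)}$ is thus pushed outside $D$ across $\Gamma_0$ before time $T_i$. Restricting the resulting $u$ to $D$ and regarding the trace on $\Gamma_0$ as the control yields a global solution on $\R_+$; the $L^\infty$ and $H^{s-1}$ smallness of $u-(c,0)$ and $\nabla p$ at time $T_i$ is then controlled by the tail $\sum_{j>i}\|w_0^{(j)}\|_{s-1}$, which tends to zero.

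The main obstacle is the perturbation step. On each window $[T_{i-1},T_i]$ the genuine velocity differs from $\bar u$ by the Biot--Savart-type contribution of the residual vorticity plus accumulated deformation, and I need to keep this difference small enough (in a norm strong enough to control ODE flows) so that property~$(P)$ survives uniformly over $i$. This is precisely where the faster-than-$1/x_1^2$ decay of $\bar u - (c,0)$ pays off: combined with the rapid decay in~$i$ of $\|w_0^{(i)}\|_{s-1}$, it makes the remote-vorticity contributions summable in $C^1(\overline D)$, whereas a merely bounded reference field would not allow one to preserve the exit property of the flow after adding infinitely many corrections. Assembling Theorem~\ref{T:wp}, Gr\"onwall-type stability of $\phi^{\bar u+v}$ under $v$-perturbations, and the superpolynomial decay from Proposition~\ref{T:Green}, the induction produces a $u\in\XX^s(D)$ and $p\in C(\R_+,\dot H^s(D))$ satisfying (\ref{1:1})--(\ref{1:4:init}) and the limit (\ref{1:karavardy}).
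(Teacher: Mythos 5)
Your overall strategy (return method, vorticity transport, partition of the initial vorticity over a covering of $\overline D$, exponential decay (\ref{1:expdecay}) making the tails summable against polynomially growing exit times) matches the paper's architecture. But the core of your construction --- a \emph{stationary} reference field $\bar u(x)=(c,0)+\sum_{j=1}^N\lambda_j\nabla G_{a_j}(x)$ with property $(P)$ --- does not work, and this is precisely the point where the unbounded strip forces a different construction. A stationary, divergence-free field on $D$, tangent to $\Gamma\setminus\Gamma_0$, whose perturbation of $(c,0)$ decays (each $\nabla G_{a_j}|_D$ in fact decays rapidly in $|x_1|$ by the Fourier formula in Proposition \ref{T:Green}), cannot flush every ball of the infinite strip out through the bounded window $\Gamma_0$. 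If $c\neq 0$, particles far downstream drift with velocity $\approx(c,0)$ away from $\Gamma_0$ and never return. If $c=0$, the trajectories of an autonomous $2$D incompressible flow are confined to level curves of the (locally defined) stream function; only the streamlines that actually terminate on $\Gamma_0$ can carry mass out, and for a finite combination of the $G_{a_j}$ these fill only a bounded neighbourhood of $\Gamma_0$ --- almost all of the strip lies on closed or non-exiting streamlines. No choice of finitely many poles and coefficients, nor any density/analyticity argument in $a$, repairs this: the obstruction is structural for autonomous fields. Consequently the ``open-density argument'' you invoke to obtain $(P)$ has nothing to converge to.

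The paper's resolution is to make the reference solution genuinely time-dependent: Proposition \ref{P:Tetakar} produces a sequence of potentials $\theta^i$, each designed to expel \emph{one} ball $B^i$ during a window $[t_{i-1},t_{i-1/2}]$ and then run backwards on $[t_{i-1/2},t_i]$ so that $\phi^{\overline u}(\cdot,t_i)=\mathrm{id}$ (formula (\ref{E:tipahimasin})); the amplitudes are rescaled as in (\ref{2:thetainshanak}) so that $\|\nabla\theta^i\|\le 1/i$, which yields $\overline u\to(c,0)$ while the exit times $\tau_i\le Mi$ stay summable against (\ref{1:expdecay}). The Green functions enter only inside Lemma \ref{L:2}, in a duality argument showing $\{\nabla\xi(x_0):\xi\in\aA\}=\R^2$, i.e.\ as infinitesimal generators of admissible directions, not as summands of the reference field. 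A secondary gap: on each window you propose to invoke Theorem \ref{T:wp}, but that theorem is local in time with $T_*=T_*(\|u_0\|_s)$ and assumes the impermeable condition on all of $\Gamma$; with inflow through $\Gamma_0$ even existence is open (Remark \ref{rem:2.6}). The paper instead closes the argument with the fixed-point map $F_{u_0}$ on $\YY_\nu(u_0)$, solving linear transport problems for the localized vorticities $w^l$ and discarding each $w^l$ on $\overline D$ once its supporting ball has exited (formula (\ref{1:wisahm})), which is what actually produces a global controlled solution.
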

As explained in Introduction, in this formulation  the control is
not given explicitly, but we can assume that control acts on the
system as a boundary condition on $\Gamma_0$.  So we show that
there exists control $\eta$  such that there is a solution of our
system with $ u\cdot n|_{\Gamma_0}=\eta$ verifying
(\ref{1:karavardy}). As we mentioned in Remark \ref{rem:2.6}, we
are not able to show that this solution
 is unique.

Using a standard scaling argument for Euler system, we can reduce
this theorem to a small neighborhood of the origin.
\begin{theorem}\label{T:prop} There exists $\e>0$ such that for any  $u_0\in H^s(D)$ and $c\in \R$  verifying (\ref{1:divpaym})-(\ref{1:expdecay}) and
\begin{align*}
\|u_0\|_{s }&< \e,\quad |c|<\e
\end{align*}
there is a solution $(u,p) \in \XX^{s}(D)\times C (\R_+, \dot H^{s
 } (D) )$ of   (\ref{1:1})-(\ref{1:4:init}) satisfying\!~
 (\ref{1:karavardy}).
\end{theorem}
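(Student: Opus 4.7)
My strategy is to implement Coron's return method using the particular reference solution $\overline u(t,x)$ produced in Section \ref{S:3}: a potential flow on $D$ (so $\diver\overline u = 0$ and $\rot\overline u = 0$) satisfying $\overline u\cdot n = 0$ on $\Gamma\setminus\Gamma_0$, decaying faster than $1/\langle x_1\rangle^2$, converging to $(c,0)$ as $t\to\infty$, and whose flow $\phi^{\overline u}$ enjoys property $(P)$ in infinite time. Being a gradient, $\overline u$ already solves Euler with a suitable pressure.

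I look for the solution of (\ref{1:1})--(\ref{1:4:init}) in the form $u = \overline u + v$. Then $w := \rot u = \rot v$, and by the vorticity reformulation of Section \ref{S:1} it suffices to solve
\begin{align*}
\dot w + \langle u, \nabla\rangle w = 0, \qquad w|_{t=0} = \rot u_0,
\end{align*}
coupled with the reconstruction $\rot v = w$, $\diver v = 0$, $v\cdot n = 0$ on $\Gamma\setminus\Gamma_0$; the flux on $\Gamma_0$ is the boundary control. On the inflow part of $\Gamma_0$ I impose $w=0$, so that material entering $D$ carries no vorticity. I construct $u$ by a Picard scheme: put $u^{(0)} := \overline u$, and given $u^{(k)}$ solve the transport equation for $w^{(k+1)}$ along $\phi^{u^{(k)}}$, then recover $v^{(k+1)}$ from the div--curl system and set $u^{(k+1)} := \overline u + v^{(k+1)}$.

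Three quantitative ingredients are needed. First, for $v^{(k)}$ small the flow $\phi^{u^{(k)}}$ stays uniformly close to $\phi^{\overline u}$, so property $(P)$ persists. Second, since $\overline u$ and $v^{(k)}$ decay in $x_1$, backward characteristics drift only a bounded distance in $x_1$ on bounded time intervals; combining this with $(P)$ and the hypothesis (\ref{1:expdecay}) yields $\|w^{(k)}(\cdot,t)\|_{s-1}\to 0$ as $t\to\infty$. Third, Lemma \ref{L:curldiv} together with a standard treatment of the boundary flux transfers this decay to the velocity: $\|\nabla v^{(k+1)}(\cdot,t)\|_{s-1}\lesssim \|w^{(k+1)}(\cdot,t)\|_{s-1}$, with an analogous $L^\infty$ bound. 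The contraction estimate needed when $|c|+\|u_0\|_s<\e$ is then obtained by running the same transport-and-reconstruct argument on differences $u^{(k+1)} - u^{(k)}$, in the spirit of Step~4 of the proof of Theorem \ref{T:wp}.

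The main obstacle is the second ingredient: turning the geometric statement $(P)$ together with super-quadratic exponential decay of $\rot u_0$ into uniform-in-$k$ convergence $w^{(k)}(\cdot,t)\to 0$ requires quantitative control on how long a characteristic can remain in $D$ as a function of its current $x_1$, and this is exactly where the specific construction of $\overline u$ in Section \ref{S:3}, built from the Neumann Green functions $G_a$ of Proposition \ref{T:Green} and hence decaying faster than $1/\langle x_1\rangle^2$, enters essentially. Once this is done, the iteration converges in $\XX^s(D)$, the limit solves (\ref{1:1})--(\ref{1:3}) in the vorticity sense and thus in the original form after recovering $p\in C(\R_+,\dot H^s(D))$ by the equivalence argument of Section \ref{S:1}, and (\ref{1:karavardy}) follows from $\overline u\to (c,0)$ together with $\|\nabla v(\cdot,t)\|_{s-1}\to 0$.
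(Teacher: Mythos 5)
Your overall architecture (return method with the reference flow $\overline u=\nabla\overline\theta+(c,0)$ from Section \ref{S:3}, vorticity transport, div--curl reconstruction, Bona--Smith-type iteration) matches the paper's, but there is a genuine gap exactly at the point you yourself flag as ``the main obstacle'', and your proposed handling of the vorticity at $\Gamma_0$ would not deliver the required regularity. You solve the transport equation \emph{in $D$} with the inflow condition $w=0$ on the entering part of $\Gamma_0$. Generically such a solution is not of class $H^{s-1}(D)$: across the hypersurface separating the domain of influence of the initial vorticity from that of the (zero) inflow data, $w$ is only as smooth as the compatibility between $\rot u_0$ and $0$ allows, and no such compatibility is assumed. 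Moreover, prescribing zero inflow data does not by itself produce the quantitative decay $\|w(\cdot,t)\|_{s-1}\to 0$; you correctly observe that one needs to control how long a characteristic can linger in $D$ as a function of $x_1$, but you do not supply that control.

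The paper's resolution of both problems is a single device you are missing: localize the initial vorticity with a partition of unity $\kappa^l$ subordinate to the covering balls $B^l$ of Proposition \ref{P:Tetakar}, extend the velocity to the larger strip $\hat D$, transport each piece $w^l$ (with data $\kappa^l\rot(\pi u_0)$) in $\hat D$ where no boundary condition is needed, and define $w(\cdot,t)=\sum_{l=i+1}^\infty w^l(\cdot,t)$ on $[t_{i-1/2},t_{i+1/2}]$, dropping the piece $w^l$ precisely at the time $t_{l-1/2}$ at which Proposition \ref{L:flow} guarantees $w^l$ vanishes identically on $\overline D$. Each summand is smooth in $\hat D$, the dropped terms vanish on $\overline D$ at the dropping time, so $w\in C(\R_+,H^{s-1}(D))$ with no compatibility issue; and the decay follows because by time $t_{i-1/2}$ only the tail $l>i$ survives, whose norm is bounded by $Ce^{Ct_{i+1/2}}\|\rot u_0\|_{s-1,D\setminus\cup_{l\le i}B_l}$ via Lemma \ref{Lemma1} and Gronwall. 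Since $\tau_i\le Mi$ gives $t_{i+1/2}\le Ci^2$ while (\ref{1:expdecay}) makes the tail $O(e^{-\al i^{2+\beta}})$, the product tends to zero --- this is exactly why the hypothesis is super-quadratic. A secondary omission: your fixed-point scheme iterates with the fixed data $u_0$, whereas the contraction estimate loses a derivative ($\|w_m-w_k\|_{s-1}$ is controlled by $\|u^{m-1}-u^{k-1}\|_{s-1}\|w_m\|_s$), so the paper must regularize the data to $u_0^m=J_m(u_0)$ at step $m$ and interpolate, as in Step 4 of Theorem \ref{T:wp}; invoking that step ``in spirit'' does not close this, because here $\|w_m\|_s$ grows like $m$.
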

\begin{proof}[Proof of Theorem \ref{T:himnth}]
Let $\e>0$ be the constant in Theorem \ref{T:prop}. Take any
$u_0\in H^s  (D)$ and $c\in \R$  verifying
(\ref{1:divpaym})-(\ref{1:expdecay}).  Let   $M>0$ be  such that
\begin{align*}
\bigg\|\frac{u_0}{M}\bigg\|_{s }&< \e,\quad
\bigg|\frac{c}{M}\bigg|<\e.
\end{align*}
By Theorem \ref{T:prop}, there exists a solution $(u_M,p_M)$
of (\ref{1:1})-(\ref{1:3}) with initial condition
$u_M(0)=\frac{u_0}{M}$,  such that \begin{align*}
 \lim_{t \rightarrow \infty }( \|u_M(\cdot,t)-(\frac{c}{M},0)\|_{L^\infty(D)}+\| \nabla u_M(\cdot,t)\|_{s-1 }+\| \nabla p_M(\cdot,t)\|_{s-1 } )=0.
 \end{align*}
Then $(u,p)=(Mu_M(x,Mt),M^2p_M(x,Mt))$ is a solution of our system
with $u (0)= u_0 $ and it satisfies (\ref{1:karavardy}).
\end{proof}

\begin{proof}[Proof of Theorem \ref{T:prop}]
The proof of this theorem is based on generalization of the Coron
return method to the case of an unbounded strip. It consists in
construction of a particular solution
 $(\overline u,\overline p)$ of (\ref{1:1})-(\ref{1:3}) such that the solution of linearized
  system around $(\overline u, \overline p)$ verifies  property (\ref{1:karavardy}). Then, in the
  small neighborhood of $\overline u$,  we construct a solution $u$  of Euler system  satisfying (\ref{1:karavardy}).

  \vspace{6pt}  \textbf{Step 1.} In this step, we construct a particular solution $(\overline u,\overline p)$ of  (\ref{1:1})-(\ref{1:3})
   such that any point of strip $D$, driven by the flow of $\overline u $, leaves $\overline D$ at some time.
Let $\hat D \subset \R^2$ be the strip
 \begin{align*}
\hat{D} :=\{(x_1,x_2): \, x_1\in \R, x_2\in(-2,2)\}.
\end{align*}
 Let us admit the  proposition below, which is proved
in Section \ref{S:5}.
\begin{proposition}\label{P:Tetakar} There are scalar functions $ \theta^i \in C^1( \hat D  \times\R_+)$ with
  $\nabla \theta^i\in~\!\! \XX^{s}(\hat D  )$, open balls $B^i$, a sequence $\tau_i\subset \R_+$,
  constants $M,\lambda$ and an integer $N\in \N$ such that the following properties are true.
 \begin{enumerate}
   \item  \underline{Covering.} For any integer $k\ge 0$, we have
   \begin{align}
[k,k+1]\times[-1,1]&\subset\bigcup_{j=1}^N B^{2kN +j},  \label{E:3 gnd1}\\
[-k-1,-k]\times[-1,1]&\subset\bigcup_{j=1}^N B^{(2k+1)N+j}.\label{E:3 gnd2}
\end{align}
In particular, the union of  balls
$B^i$  covers $\overline D$ and any square $[k,k+1]\times[-1,1]$ is
covered by $N$ balls.
   \item \underline{Support.}  \begin{align}
\supp \theta^i \subset   \hat D \times (0,\tau_i).\label{1:Thetakrich}
\end{align}
   \item \underline{Vector field.} The time dependent vector field $\nabla \theta^i $ is divergence-free
in $D$
   and tangent to $\Gamma\setminus \Gamma_0$ and $\p \hat D$:
 \begin{align}
&\Delta\theta^i=0 \qquad\text{in}\quad D\times [0,\tau_i],\label{1:11}\\
&\frac{\p\theta^i}{\p n} =0\qquad \text{on}\quad  (\Gamma\setminus
\Gamma_0 )\cup \p\hat D \times [0,\tau_i] .\label{1:12}
\end{align}
       \item \underline{Time decay.} For any $i\ge 1$ we have
              \begin{align}
||\nabla\theta^i(\cdot,t)||_{\XX^s(\hat  D)} &\le\frac{1}{i}\text { for any }
t\in[0,\tau_i],\label{1:thetaverj2}\\
\tau_i&\le M i.\label{1:tauignah}
\end{align}
 \item \underline{Flow.}  For any $i\ge 1$ and $c\in \R$ with $|c|<\lambda$ the flow associated with $\nabla\theta^i+(c,0)$ is such that
\begin{align}
&\phi^{\nabla\theta^i+(c,0)}(B^i,\tau_i)\subset \hat D \setminus \overline D
\label{1:thetaverj1}.
\end{align}
Moreover, there are two closed balls $\tilde B_1, \tilde B_2
\subset \hat D \setminus \overline D$ such that
\begin{align}
& {\cup_{i=1}^\infty
\phi^{\nabla\theta^i+(c,0)}(B^i,\tau_i)}\subset \tilde B_1\cup
\tilde B_2\label{1:thetaverj1ar22}.
\end{align}
 \end{enumerate}
\end{proposition}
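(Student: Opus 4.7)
My plan is to build each $\theta^i$ as a time-modulated harmonic potential on $D$ constructed from the Neumann Green functions of Proposition~\ref{T:Green}, and then extended into $\hat D\setminus \overline D$ with zero Neumann on $\p\hat D$. First, cover $\overline D$ by balls $B^i$ of fixed radius, with $N$ of them per unit slab $[k,k+1]\times[-1,1]$, indexed so as to alternate between the positive and negative $x_1$ halves as in (\ref{E:3 gnd1})--(\ref{E:3 gnd2}); this step is elementary.

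For the spatial profile I would, for each $i$, select a source point $a_i\in \tilde D\setminus \overline D$ --- in the bulge above $(a,b)\times\{1\}$ when $B^i$ lies in the upper half of $D$, in the bulge below $(a+d,b+d)\times\{-1\}$ otherwise --- and form a linear combination of such Green functions $G_{a_i}$. By Proposition~\ref{T:Green}, each $G_{a_i}$ is smooth on $\overline D$, harmonic in $D$, satisfies $\p G_{a_i}/\p n=0$ on $\Gamma\setminus \Gamma_0$, and its gradient belongs to $\sS(D)$. The analytic dependence on the source (assertion $(iv)$ there) gives enough flexibility to tune the direction of the resulting gradient at the centre of $B^i$. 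A smooth cutoff followed by an extension of the result to $\hat D$ with zero Neumann on $\p\hat D$ produces a spatial factor $\Theta^i$ still satisfying (\ref{1:11})--(\ref{1:12}).

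For the time factor, set $\theta^i(x,t)=\alpha_i(t)\Theta^i(x)$ with $\alpha_i\in C^1(\R_+)$ a non-negative bump supported in $(0,\tau_i)$, and normalise the amplitude so that $\|\nabla\theta^i\|_{\XX^s(\hat D)}\le 1/i$; the Schwartz decay of $\nabla G_a$ in $D$ makes this normalisation uniform in $i$. Then choose $\tau_i$ and the profile $\alpha_i$ so that the flow of $\nabla\theta^i+(c,0)$ carries the whole ball $B^i$ across the relevant component of $\Gamma_0$ into one of two preselected closed balls $\tilde B_1\subset \hat D\cap\{x_2>1\}$ or $\tilde B_2\subset \hat D\cap\{x_2<-1\}$, depending on which half of $D$ contains $B^i$. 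This delivers (\ref{1:Thetakrich}), (\ref{1:thetaverj2}) and (\ref{1:tauignah})--(\ref{1:thetaverj1ar22}).

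The main obstacle is verifying the flow condition uniformly for every $c\in(-\lambda,\lambda)$. By Sobolev embedding, $|\nabla\theta^i|$ is bounded pointwise by a constant multiple of $1/i$, while the ball $B^i$ can sit at horizontal distance of order $i$ from $\Gamma_0$; hence the argument must rely on the drift $(c,0)$ to bring $B^i$ close to the control window and on $\nabla\theta^i$ to then eject it. Tuning the source $a_i$, the cutoff, and the time profile $\alpha_i$ so that the pulse fires in the right window --- and so that the landing zone is a single fixed pair of balls $\tilde B_1,\tilde B_2$ uniformly in $i$ and in $c$ --- is the technically delicate piece of the argument. The analyticity of $G_a$ in $a$ supplies the degrees of freedom required to steer, while the $\sS(D)$ decay of $\nabla G_a$ provides the uniform $\XX^s$ estimates.
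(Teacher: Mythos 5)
There is a genuine gap, and you put your finger on it yourself in your last paragraph and then resolve it the wrong way. You write that ``the argument must rely on the drift $(c,0)$ to bring $B^i$ close to the control window''. This cannot work: the proposition must hold for \emph{every} $c$ with $|c|<\lambda$, in particular for $c=0$, where there is no drift at all; and even for $c\neq 0$ a single fixed sign of $c$ transports balls in only one horizontal direction, while balls lying on both sides of $\Gamma_0$ must be brought to it. The paper's resolution of exactly this obstacle is the one ingredient your sketch is missing: the auxiliary potential $\tilde\theta^{2kN+j}(x,t)=(-k-c)\,x_1\,h'(t)$ of (\ref{E:taliqverjsd}). Its gradient is a spatially constant horizontal field, hence harmonic, divergence-free and tangent to every horizontal boundary component of $D$ and $\hat D$, and by (\ref{E:taliqverj}) its flow (together with the drift) rigidly translates the distant ball $B^{2kN+j}=B^j+(k,0)$ exactly back onto the ball $B^j$ near the control window, simultaneously cancelling whatever drift $c$ is present. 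Only after this reduction does one apply one of \emph{finitely many} ejecting fields $\tilde\theta^j$, $j=1,\dots,N$, which is also how the landing sets are confined to two fixed balls $\tilde B_1,\tilde B_2$ uniformly in $i$; the $1/i$ amplitude bound and $\tau_i\le Mi$ are then obtained by the time dilation (\ref{2:tauinshan})--(\ref{2:thetainshanak}), which shrinks the instantaneous amplitude while preserving the flow map. Without such a transport device your construction is stuck: a field of pointwise size $O(1/i)$ acting for time $O(i)$ produces an $O(1)$ displacement, which cannot carry a ball at distance $\sim i/(2N)$ from $\Gamma_0$ into a fixed target.

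A secondary gap concerns the ejection step itself. Tuning a combination of Green functions so that its gradient points in a favourable direction \emph{at the centre} of $B^i$ does not guarantee that the flow carries the \emph{whole closed ball} out of $\overline D$ through $\Gamma_0$. The paper's Lemma \ref{L:2} does more: it first proves the pointwise spanning property $\R^2=\{\nabla\xi(x_0):\xi\in\aA\}$ (using the analyticity of $G_a$ in $a$ and the blow-up of $\nabla G_a$ as $a\to x_0$ to rule out a nontrivial annihilating vector $V$), and then uses Coron's path-following argument: a prescribed continuous curve $F(t)$ from $x_0$ to a point of $\hat D\setminus\overline D$, avoiding $\Gamma\setminus\Gamma_0$, is shadowed within $\e$ by the flow of $\sum_i\xi_i(x)h_i(t)$; uniformity over a whole ball and over small $c$ then follows by continuity and compactness. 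Your appeal to ``analyticity supplies the degrees of freedom to steer'' points at the right tools but omits this argument, which is the actual content of the lemma.
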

Let us set $t_0=0$,
\begin{align}\label{E:tierikar}
t_i=2\sum_{j=1}^i\tau_j,\quad t_{i+1/2}=\frac{t_i+t_{i+1}}{2},
\quad i\ge1.
\end{align}
We define $\overline\theta$ in the following way:
\begin{align}
\overline\theta(x,t)&=\theta^i(x,t-t_{i-1})  \text { for } t\in [t_{i-1},t_{i-1/2}],\label{E:Tgcikar1}\\
\overline\theta(x,t)&=-\theta^i(x,t_{i}-t) \text { for } t\in
[t_{i-1/2},t_{i}].\label{E:Tgcikar2}
\end{align}
Notice that  from the construction of $t_i$ we have
$t_i-t_{i-1/2}=\tau_i$. Thus (\ref{1:Thetakrich}) shows that
$\overline \theta  \in C^1( \hat D\times\R_+)$ and  $\nabla \overline \theta\in~\!\! \XX^{s}(\hat D)$.  We define
\begin{align*}
\overline{u}:&=\nabla\overline\theta +(c,0),\\
\overline{p}:&=-\p_t\overline\theta-\frac{|\nabla\overline\theta|^2}{2}-c\p_1\theta.
\end{align*}
Then  $(\overline{u},\overline{p} )$  is a  solution of
(\ref{1:1})-(\ref{1:3}). Indeed, by construction,  $
(\overline{u},\overline{p} ) $ satisfies   (\ref{1:1}). Properties
(\ref{1:11}) and (\ref{1:12}) imply (\ref{1:2}) and
(\ref{1:3}), respectively. Moreover, it follows from
(\ref{1:thetaverj2}), (\ref{1:thetaverj1}) that for any $i\in\N$,
we have
\begin{align}
&\,\,\,\phi^{\overline{u}}(B^i,t_{i-1/2})\not\subset \overline D,\nonumber\\
&\lim_{t \rightarrow \infty }( \|\overline
u(\cdot,t)-(c,0)\|_{L^\infty(D)}+\| \nabla \overline
u(\cdot,t)\|_{s-1 } )=0.\label{E:sbalizard}
\end{align}

We deduce  from (\ref{E:Tgcikar1}) and (\ref{E:Tgcikar2}) that
\begin{align}\label{E:tipahimasin}
\phi^{\overline{u}}(x,t_{i})&=x
\end{align}
for any $i\ge 1$ and  $x\in  \hat D$.
We shall need the following result, which is proved in Section \ref{S:5}.
\begin{proposition}\label{L:flow}  There is a constant $\nu>0$ such that the functions $ \theta^i  $ in  Proposition \ref{P:Tetakar} can be chosen
  in a way that, for any $u\in  \XX^{s}( \hat D)$ satisfying the inequality  $$ \int_0^\infty\|u(t)-\overline{u}(t)\|_{s,\hat D}\dd t\le \nu,$$  we have
$\phi^{u}(B^i,t_{i-1/2})\subset \hat D\setminus \overline D$ for any $i\ge
1$.
\end{proposition}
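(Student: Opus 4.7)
The plan is to combine the standard Gr\"onwall-type comparison estimate for characteristics with the periodicity relation (\ref{E:tipahimasin}); the latter is precisely what prevents the exponential blow-up one would otherwise face over the unbounded time interval $[0,t_{i-1/2}]$ as $i\to\infty$. Throughout I use the Sobolev embedding $\|v\|_{L^\infty(\hat D)}\le C\|v\|_{s,\hat D}$ (valid since $s\ge 4$) to turn $H^s$ bounds into $L^\infty$ bounds; in particular $\int_0^\infty\|u-\overline u\|_{L^\infty(\hat D)}\,\dd t\le C\nu$. The first observation is that on every interval $[t_{j-1},t_j]$ the field $\overline u$ reduces to $\pm\nabla\theta^j+(c,0)$, so by (\ref{1:thetaverj2}), (\ref{1:tauignah}) and Sobolev embedding for $\nabla^2\theta^j\in H^{s-1}(\hat D)$,
\begin{equation*}
\int_{t_{j-1}}^{t_j}\|\nabla\overline u(\cdot,t)\|_{L^\infty(\hat D)}\,\dd t\le 2\tau_j\cdot C\|\nabla^2\theta^j\|_{s-1,\hat D}\le 2Mj\cdot\frac{C}{j}=2MC.
\end{equation*}
Set $K:=\exp(2MC)$; this gives a uniform-in-$j$ bound both on the Lipschitz constant of the map $y\mapsto\phi^{\overline u}_{t_{j-1},t}(y)$ for $t\in[t_{j-1},t_j]$ and on the Gr\"onwall factor in the per-period comparison estimate.

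Writing $\phi^u_{s,t}(y)$ for the flow of $u$ from time $s$ to time $t$ starting at $y$, the classical ODE comparison yields, for every $j\ge 1$, $t\in[t_{j-1},t_j]$ and $y\in\hat D$,
\begin{equation*}
\bigl|\phi^{u}_{t_{j-1},t}(y)-\phi^{\overline u}_{t_{j-1},t}(y)\bigr|\le K\int_{t_{j-1}}^{t}\|u(\sigma)-\overline u(\sigma)\|_{L^\infty(\hat D)}\,\dd\sigma
\end{equation*}
together with $|\phi^{\overline u}_{t_{j-1},t}(y_1)-\phi^{\overline u}_{t_{j-1},t}(y_2)|\le K|y_1-y_2|$. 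Iterating (\ref{E:tipahimasin}) gives $\phi^{\overline u}_{t_{j-1},t_j}=\mathrm{id}$, so setting $y_k:=\phi^u(x,t_k)$ I can telescope the per-period displacements,
\begin{equation*}
|y_k-y_{k-1}|=\bigl|\phi^{u}_{t_{k-1},t_k}(y_{k-1})-\phi^{\overline u}_{t_{k-1},t_k}(y_{k-1})\bigr|\le K\int_{t_{k-1}}^{t_k}\|u-\overline u\|_{L^\infty}\,\dd\sigma,
\end{equation*}
obtaining $|\phi^u(x,t_j)-x|\le K\int_0^\infty\|u-\overline u\|_{L^\infty}\,\dd\sigma\le KC\nu$ uniformly in $j$ and $x$. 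The crucial gain is that the prefactor is the fixed constant $K$, and not a $j$-dependent exponential of $t_j$.

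For the midpoint times $t_{j-1/2}$, one more use of $\phi^{\overline u}_{0,t_{j-1}}=\mathrm{id}$ gives
\begin{align*}
\phi^u(x,t_{j-1/2})-\phi^{\overline u}(x,t_{j-1/2})&=\bigl[\phi^{u}_{t_{j-1},t_{j-1/2}}(y_{j-1})-\phi^{\overline u}_{t_{j-1},t_{j-1/2}}(y_{j-1})\bigr]\\
&\quad+\bigl[\phi^{\overline u}_{t_{j-1},t_{j-1/2}}(y_{j-1})-\phi^{\overline u}_{t_{j-1},t_{j-1/2}}(x)\bigr],
\end{align*}
and I bound the two brackets respectively by $KC\nu$ (comparison estimate on one period) and $K\cdot KC\nu$ (Lipschitz estimate of $\phi^{\overline u}$ combined with the already-proved bound $|y_{j-1}-x|\le KC\nu$). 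Hence
\begin{equation*}
\sup_{i\ge 1}\sup_{x\in\hat D}\bigl|\phi^u(x,t_{i-1/2})-\phi^{\overline u}(x,t_{i-1/2})\bigr|\le (K+K^2)C\nu.
\end{equation*}
By (\ref{1:thetaverj1ar22}) the set $\tilde B_1\cup\tilde B_2$ is a compact subset of $\hat D\setminus\overline D$, so $d_0:=\dist(\tilde B_1\cup\tilde B_2,\overline D)>0$; combined with the inclusion $\phi^{\overline u}(B^i,t_{i-1/2})\subset\tilde B_1\cup\tilde B_2$ (obtained from (\ref{1:thetaverj1}) via $\phi^{\overline u}_{0,t_{i-1}}=\mathrm{id}$), this forces $\phi^u(B^i,t_{i-1/2})\subset\hat D\setminus\overline D$ as soon as $\nu<d_0/((K+K^2)C)$.

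The main difficulty is the one already highlighted: a naive Gr\"onwall comparison on the full interval $[0,t_{i-1/2}]$ would produce a factor growing like $\exp(Ci)$ and destroy the uniformity in $i$. The periodicity (\ref{E:tipahimasin}) is precisely the structural feature that allows one to restart the comparison at every $t_{j-1}$ from the current position $y_{j-1}$ and accumulate per-period errors additively via the $L^1_t$-hypothesis on $u-\overline u$. A minor technicality is to make sense of $\phi^u$ when $u$ is only defined on $\hat D$; this is handled by a smooth extension of $u$ past $\p\hat D$, and by observing a posteriori that the above drift estimate confines the perturbed trajectories to a fixed neighbourhood of $\phi^{\overline u}(B^i,\cdot)$, which, thanks to (\ref{1:12}) and (\ref{1:thetaverj1ar22}), lies in a compact subset of $\hat D$.
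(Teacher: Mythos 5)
Your proof is correct, but it follows a genuinely different route from the paper's. The paper runs a single Gr\"onwall comparison over the whole interval $[0,t_{i-1/2}]$ and keeps the accumulated exponent $\int_0^{t_{i-1/2}}\Psi(t)\,\dd t$ bounded uniformly in $i$ by exploiting a \emph{spatial} decay of the Lipschitz constant of $\nabla\theta^j$, namely inequality (\ref{1:thetaverj}) of Lemma \ref{L:flow11}: the trajectory issued from a far ball $B^i$ stays at distance of order $k-[j/(2N)]$ from the region where $\nabla^2\theta^j$ is concentrated, so period $j$ contributes $O((k-[j/(2N)])^{-2})$ to the exponent and the sum over $j$ converges. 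This is exactly why the proposition is phrased as ``the $\theta^i$ can be chosen in a way that'': the extra decay rests on the membership $\p_i\p_j G_a\in\sS(D)$ from assertion (iii) of Proposition \ref{T:Green}. You instead restart the comparison at every $t_{k-1}$ using the exact periodicity (\ref{E:tipahimasin}), so that only the \emph{per-period} Gr\"onwall factor matters, and that factor is uniformly bounded because (\ref{1:thetaverj2}) and (\ref{1:tauignah}) give $\int_{t_{j-1}}^{t_j}\|\nabla\overline u\|_{L^\infty}\le 2\tau_j\cdot C/j\le 2MC$; the per-period errors then accumulate additively against the $L^1_t$ hypothesis on $u-\overline u$. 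Your argument is shorter, requires no choice of the $\theta^i$ beyond what Proposition \ref{P:Tetakar} already provides, and bypasses Lemma \ref{L:flow11} and the $\sS(D)$-decay of the Green functions entirely. The trade-off is a heavier reliance on the exactness of (\ref{E:tipahimasin}): if the reference flow returned only approximately to the identity after each period, your telescoping would accumulate the per-period drifts, whereas the paper's spatial-decay mechanism would tolerate a uniformly bounded residual displacement. Both proofs share (and the paper glosses over, while you at least flag it) the technicality of keeping $\phi^{u}$ inside $\hat D$, where $u$ is defined; in the paper's actual application this is automatic, since the field fed into the proposition equals $\overline u$ outside $D_1$ and $\overline u$ is tangent to $\p\hat D$.
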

 From now on, we assume that functions $\theta^i$ verify this proposition.

  \textbf{Step 2.} In this step, we construct an application $F_{u_{0}}$  such that its   fixed point
    is a solution of our stabilization problem.
First, for any constant $\nu>0$ let us introduce the set
\begin{align*} \YY_\nu(u_0):=\{&u\in  \XX^s(D): \quad\diver u=0,
 \quad \int_0^\infty\|u(t)-\overline{u}(t)\|_{s,D}\dd t\le \nu,
 \\&u(x,t)\cdot n(x)
 = (u_0(x)\mu(t)+\overline u(x,t) )\cdot n(x)  \text{ on } \Gamma\times \R_+\},\end{align*}
where $\mu   \in C_0^\infty  ([0,\infty  )  )$  is a non-negative
function such that
\begin{align*}
\mu(0)=1,\quad \int_0^\infty \mu(t)\dd t<1 .\end{align*}
 Let $D_1:= \R \times (-\frac{3}{2},\frac{3}{2})$
 and  $\pi: H^s(D) \rightarrow  H^s(\hat D) $ be any linear bounded extension
operator such that $\supp \pi u \subset D_1$ for any $u\in H^s(D) $. Let $\kappa^i\in C_0^\infty(\hat D)$ be a
partition of unity subordinate to $B^i$, i.e.,
\begin{align*}
\supp\kappa^i&\subset B^i,\\
\sum_{i=1}^\infty\kappa^i&=1\text{ in } \overline D.
\end{align*}
Take any $u\in \YY_\nu(u_0)$ and let $w^l\in C  (\R_+,H^{s-1}  (
\hat  D))$ be the solution of the   linear problem
\begin{align} \label{E:wlihavas}
&\dot{w}^l+\langle \tilde {u},\nabla\rangle w^l =-(\diver \tilde {u}) w^l\text{ in }\hat {D}\times \R_+,\\
&w^l(0)=\kappa^l\rot( \pi u_0),\label{E:wlihavas12}
\end{align}
where
\begin{align}\label{E: uisharunak}
\tilde u=\overline u+\pi( u-\overline u).
\end{align}
Take  $\nu$   such that Proposition  \ref{L:flow} holds. Since $\supp
w^l(0)\subset B_l$, we obtain
 \begin{align} \label{E:wl@tlum}
{w}^l(x,t_{l-1/2})=0 \text{ for any } x\in \overline D.
\end{align}
For any $t\in\R_+$ we define the   function
 \begin{align}\label{1:wisahm}
  w(\cdot,t)&=\sum_{l=i+1}^\infty w^l(\cdot,t),\quad\text{ when }
  t\in[t_{i-1/2},t_{i+1/2}],
  \end{align}
  where $t_{-1/2}:=0$ and $i\ge 0$. Let us show that for any $t\in[t_{i-1/2},t_{i+1/2}]$ the sum in
the right-hand side of (\ref{1:wisahm}) exists  and   belongs to
$C  (\R_+,H^{s-1}  (  D)  )$. Applying  Lemma \ref {Lemma1} to
(\ref{E:wlihavas}), (\ref{E:wlihavas12}), we obtain
 \begin{align*}
\| w^l(t)\|_{s-1,\hat D}\le C( \| \kappa^l\rot( \pi u_0)
\|_{s-1,\hat D}+ \int_0^t\|\nabla\tilde u (\tau  )\|_{s-1,\hat
D}\| w^l (\tau  )\|_{s-1,\hat D}\dd \tau   ).
  \end{align*}
It follows from the Gronwall inequality and   relation (\ref{E:
uisharunak}) that
 \begin{align*}
\| w^l&\left(t\right)\|_{s-1,\hat D}\le C  \| \kappa^l\rot\left(
\pi u_0\right)  \|_{s-1, \hat D}\exp  \left(C
\int_0^t\|\nabla\tilde u\left( \tau\right)\|_{s-1,\hat D} \dd \tau
\right)
\\ &\le C \| \kappa^l\rot\left( \pi u_0\right) \|_{s-1,\hat
D}\exp\left(C \int_0^t\left(\|\nabla \overline u
\left(\tau\right)\|_{s-1,\hat D}+\|\overline u \left(\tau \right)
-\tilde u   \left(\tau \right)\|_{s,\hat D}\right) \dd \tau
  \right) .
  \end{align*}
  Using  the fact that $\overline u \in \XX^{s}(\hat D)$,  we get
 \begin{align*}
\| w^l  (t)\|_{s-1,\hat D}&\le   C  \| \kappa^l \rot  ( \pi u_0 )
\|_{s-1,\hat D}
 \exp( C( t_{i+1/2}+\nu))
   \end{align*}
 for any
  $t\in[t_{i-1/2},t_{i+1/2}]$. Thus
 \begin{align}\label{E:wlignahata} \sum_{l=i}^\infty \|
w^l(t)\|_{s-1,\hat D}&\le   C \exp  (Ct_{i+1/2})
\sum_{l=i}^\infty  \| \kappa^l\rot ( \pi u_0  ) \|_{s-1,\hat D}.
   \end{align}
  Using
  (\ref{1:expdecay}) and assertion 1 of Proposition \ref{P:Tetakar}, we derive that the right-hand side   of
(\ref {E:wlignahata}) is finite. Hence, $w \in~\!C
([t_{i-1/2},t_{i+1/2}],H^{s-1}  (\hat D))$ for any $i\ge 0 $.
Moreover, assertion    (\ref{E:wl@tlum}) yields that  $w$ is
continuous at $t_{i-1/2}$, thus $w\in C(\R_+,H^{s-1}(D)  )$ (we
emphasize that, in general,  this is not true for $\hat D$).
Furthermore, we have
\begin{align*}
\dot{w} +\langle \tilde {u},\nabla\rangle w =-(\diver \tilde {u}) w \quad\text{ in }&\quad {\hat D}\times [t_{i-1/2},t_{i+1/2}],\\
w(0)= \sum_{l=1}^\infty \kappa^l \rot \pi u_0\qquad \text{ in
}&\quad {\hat D}.
\end{align*}  In Step 3, we prove  that for this $w$ there exists a
  $v\in \YY_\nu(u_0)$
  such that
\begin{align}
\rot v={w}.\label{1:rotih}
\end{align}
For any $u\in \YY_\nu(u_0)$, let $F_{u_0}(u):=v$. In Step 4, we
show that the mapping $F_{u_0}:\YY_\nu(u_0)\ri\YY_\nu(u_0)$ has a
fixed point. We shall prove that this fixed point is a solution of
our stabilization problem.

  \vspace{6pt}  \textbf{Step 3.} In this step, we prove the
  existence of the solution $v\in
\YY_\nu(u_0)$ of (\ref{1:rotih}). By Lemma \ref{L:curldiv},  there
is a function $z\in C(\R_+,H^s( D ))$ such that
 \begin{align}
{\rot z}&=w,\nonumber\\
\diver z&=0,\nonumber\\
z\cdot n&=0,\nonumber\\
\|z(\cdot,t)\|_{s,D}&\le C \|w(\cdot,t)\|_{s-1,D}.\label{E:dfdf3}
\end{align}
 Let us take the solution of the following problem
\begin{align*}
\Delta \ph&=0   \text{ in } {D},  \\
\frac{\p\ph}{\p n}  &=(u_0 \mu   ) \cdot n   \quad \text{on}\quad
\Gamma.
\end{align*}
From Proposition \ref{T:poisMaz} we have $\ph\in  C(\R_+,\dot
H^{s+1}(D))$ and  \begin{align*}  \|\ph(\cdot,t) \|_{\dot
H^{s+1}(D)}\le C\| u_0 \mu(t) \|_{s,D}.
\end{align*}
Denote   $v=z+\nabla\ph+\overline u $. Let us show that $v\in
\YY_\nu(u_0)$ and (\ref{1:rotih}) is verified. Clearly
\begin{align*}
\rot v&=\rot z=w,\\
 \diver v&=\diver z+\Delta \ph=0, \\
 v \cdot n  &=( u_0(x)\mu +\overline u  )\cdot n  \text{ on }
\Gamma\times \R_+.
  \end{align*}
Hence, to show $v\in \YY_\nu(u_0)$, it suffices to prove
 for
sufficiently small  ${u_0}$ that
\begin{align}\label{1:uhany}
\int_0^\infty\|v(t)-\overline{u}(t)\|_{s,  D} \dd t\le \nu.
\end{align}
   It follows from the construction of $v$ that $$ \|v(\cdot,t)-\overline u(t)\|_{s,D}\le \|\ph(\cdot,t) \|_{\dot H^{s+1}(D)} +\|z(\cdot,t)\|_{s,D} .$$
  Proposition \ref{T:poisMaz} and (\ref {E:dfdf3}) imply
$$\int_0^\infty\|v(t)-\overline{u}(t)\|_{s,D} \dd
t\le \|u_0\|_{s,D} \int_0^\infty\mu(t) \dd t+
C\int_0^\infty\|w(\cdot,t)\|_{s-1,D} \dd t.$$
 From
(\ref{1:wisahm}) we have
\begin{align*}
\int_0^\infty\|w(\cdot,t)\|_{s-1,D} \dd
t&=\sum_{i=0}^\infty\int_{t_i-1/2}^{t_{i+1/2}} \big\|
\sum_{l=i+1}^\infty  w^l(\cdot,t) \big\|_{s-1,D} \dd t.
\end{align*}
Applying Lemma \ref{Lemma1} to $\sum_{l=i+1}^\infty  w^l$, we
obtain
\begin{align*}
 \!\big\|\sum_{l=i+1}^\infty\!  w^l\left(x,t\right) \big\|_{s-1,D}  \le
 C\exp \left(C\int _0^t \|\nabla\tilde u\left(\!\cdot,\tau\right)\|_{s-1,D} \dd \tau\!\right)\!\big\|\sum_{l=i+1}^\infty
\!\kappa^l \!\rot  u_0 \big\|_{s-1,D}.
\end{align*}
Thus
\begin{align}
\int_0^\infty\|w\left(\cdot,t\right)\|_{s-1,D} \dd t&\le C
\sum_{i=0}^\infty\int_{t_i-1/2}^{t_{i+1/2}}\big\|\sum_{l=i+1}^\infty
\kappa^l \rot u_0 \big\|_{s-1,D}\times\nonumber\\
& \quad\times \exp \left(C\int
_0^t \|\nabla\tilde u\left(\cdot,\tau\right)\|_{s-1,D} \dd \tau\right) \dd t\nonumber\\
&\le C_1 \sum_{i=0}^\infty\int_{t_i-1/2}^{t_{i+1/2}} \exp
\left(Ct_{i+1/2}\right)
  \|\rot u_0 \|_{s-1, D \backslash
\cup_{l=1}^{i}B_l}\dd t.\nonumber
\end{align}
Combining (\ref{1:expdecay}),   (\ref{1:tauignah}),
(\ref{E:tierikar}) and assertion 1 of Proposition \ref{P:Tetakar},
we get
$$ (t_{i+1/2}- t_{i-1/2})\exp (Ct_{i+1/2}) \| \rot u_0 \|_{s-1, D \backslash \cup_{l=1}^{i}B_l}\le C_2\frac{1}{i^2}$$
  for any $i>0,$ where $C_2$ does not depend on $i$.   Let
  $K$ be a  constant
such that
\begin{align*}
C_1 C_2\sum_{i=K}^\infty \frac{1}{i^2}<\frac{\nu}{2}.
 \end{align*}
Taking $u_0$ sufficiently small  such that
\begin{align*}
\|u_0\|_{s,D} & +
\sum_{i=1}^{K}\int_{t_i-1/2}^{t_{i+1/2}}\sum_{l=i+1}^\infty
\|\kappa^l\rot u_0\|_{s-1,D}  \exp \left(\int _0^t \|\nabla\tilde
u\left(\cdot,\tau\right)\|_{s-1 ,D} \dd \tau\right)\dd t \\& \le
\frac{\nu}{2},
 \end{align*}
we get (\ref{1:uhany}).

\vspace{6pt}  \textbf{Step 4.} In this step, we show that the
mapping $F_{u_0}:\YY_\nu(u_0)\ri\YY_\nu(u_0)$ admits a fixed point, which is the
solution of our stabilization problem.  Let us take a sequence $u^m_0 := J_m(u_0)$, where $J_m$ is the operator defined by (\ref{E:extoper}).
 We have that $ u^m_0 \in H^{s+1}(D)$   verifies (\ref{E:2:regul1}), (\ref{E:2:regul2}).
Take $u^0(x,t)=\mu(t  )u_0 (x)+\overline  u (x,t ) $. For
sufficiently small $u_0$ we have $u^0\in \YY_\nu(u_0)$. Let
$u^1=F_{u^1_0} (u^0 )$  and let $w_1$ be defined as in
(\ref{1:wisahm}) with $u=u^0$ and $u_0 (x)=u^1_0 (x)$. In this way
we introduce the sequences $u^m\in \XX^{s }$  and $w_m\in
C(\R_+,H^{s}(  D))$ by the relations
$$
 \left\{
\begin{array}{ll}
        u^{m+1}=F_{u^{m+1}_0} (u^m) , \\
     w_{m+1}  \text { defined as in   (\ref{1:wisahm}) with } u= u^m \text{ and } u_0=u^{m+1}_0. \\
\end{array}
\right .$$ Let us show the convergence of
 $w_{m}$ in $C([0,t_{1/2}],H^{s-1}(\hat D))$. This will be proved by
  using the same arguments  as in the proof of Theorem \ref{T:wp}.   It is easy to see
\begin{align*}
\p_t\left(w_m-w_k\right)&+\langle \tilde { u}^{k-1},\nabla\rangle
\left(w_m-w_k\right) =\langle  \tilde { u}^{k-1}- \tilde {
u}^{m-1},\nabla\rangle w_m   \\&- \left(\diver \tilde { u}^{k-1}
\right)  \left(w_m-w_k  \right) - \left(\diver { \tilde u}^{m-1}-
\diver \tilde { u}^{k-1} \right)w_m.
\end{align*}
Setting  $ K ^{m,k}(t  ):= \|w_m(\cdot, t  )-w_k(\cdot, t)
\|_{s-1,\hat D}$ and using Lemmas \ref{Lemma1} and
\ref{L:curldiv}, we obtain
\begin{align}
  &K ^{m,k}( t)  \le \|u^m_0-u^k_0 \|_s+ C \int_0^ t   \big(  K ^{m,k}  ( \tau  ) \| \nabla \tilde  { u}^{k-1}(\cdot,
\tau )\|_{s-1 }  \nonumber\\& +  \|\tilde u^{m-1}(\cdot, \tau
)-\tilde u^{k-1}(\cdot, \tau ) \|_{s-1 }\|w_m( \cdot, \tau)\|_{s}
+  K ^{m-1,k-1}  ( \tau  )\|w_m( \cdot, \tau)\|_{s-1}     \big)\dd
\tau.\label{E:Kigna}
\end{align}
Let us show that for any $m\in\N$
\begin{align}\label{E: lucmansahman}
 \sup_{t\in [0,t_{1/2}]}\|w_m(\cdot, t)\|_{s-1,\hat D}< C\|u^m_0\|_{s,\hat D},
\end{align}
where $C$ depends only on   $  \|\overline u  (t)\|_{L^1 (
(0,t_{1/2}),\dot H^s  ( \hat D  )  )}$  and does not depend
on $m$. From the construction of $ w_m$, we have
\begin{align*}
\dot{w}_m +\langle \tilde {u}^{m-1},\nabla\rangle w_m = -(\diver
{ \tilde u}^{m-1}) {w_m} \qquad &\text{in}\quad {\hat D}\times \R_+,\\
w_{m}(0)= \sum_{l=1}^\infty \kappa^l \rot \pi u^m_0\qquad
&\text{in}\quad {\hat D}.
\end{align*}
Applying Lemma \ref{Lemma1}, we get
\begin{align*}
\|{w_m}  &\left(t\right)\|_{s-1,\hat D} \le C  \left(
\|u^m_0\|_{s,\hat D}+ \int_0^t \|{w_m   }\|_{s-1,\hat D} \|\nabla \tilde
{u}^{m-1} \|_{s-1,\hat D} \dd t\right) \\& \le C \left(
\|u^m_0\|_{s,\hat D}+
  \int_0^t \|{w_m}\|_{s-1,\hat D}   \left(\|\nabla \overline {u} \|_{s-1,\hat D}
+\|  \overline{u} -\tilde {u}^{m-1} \|_{s,\hat D} \right)\dd t
\right).
\end{align*}
Using the Gronwall  inequality and the fact that $\tilde {u}_{m-1}
\in \YY_\nu( u^m_0 )$, we derive
\begin{align*}
\|{w_m}  (t  )\|_{s-1,\hat D} &\le C  ( \|u_0\|_s\exp
(\int_0^{t_{1/2}}   (\|\nabla  \overline {u} \|_{s-1,\hat D}+\|  \overline{u}
-\tilde {u}_{m-1} \|_{s,\hat D} )\dd t ) \le C_1,
\end{align*}
where $C_1$ does not depend on $m$.  Thus, we obtain (\ref{E:
lucmansahman}). The construction of $u^m$ implies  boundedness of $\sup _{t\in
[0,t_{1/2}]}\|u^m\|_{s,\hat D}$ uniformly in $m$. In the same way  we can show that
\begin{align*}
 \sup_{t\in [0,t_{1/2}]}\|w_m(\cdot, t)\|_{s,\hat D}\le C\|u^m_0\|_{s+1,\hat D}.
\end{align*}
Combining this with (\ref{E:2:regul1}) and (\ref{E:2:regul2}), we get
\begin{align}   \|\tilde u^{m-1}(\cdot, \tau)-\tilde u^{k-1}(\cdot, \tau )
\|_{s-1 }&\|w_m( \cdot, \tau)\|_{s} \le  \|\tilde u^{m-1}(\cdot,
\tau)-\tilde u^{k-1}(\cdot, \tau ) \|^{1/s} \times \nonumber\\
&\times\|\tilde u^{m-1}(\cdot, \tau)-\tilde u^{k-1}(\cdot, \tau )
\|^{1-1/s}_{s }\|w_m( \cdot, \tau)\|_{s}\le a_{m,k}\label{E:3.40}
\end{align}
for any $t\in J_{t_{1/2}}$,  where $\sup_{k\ge m}a_{m,k}\ri 0$ as
$m\ri \infty$ and $a_{m,k}$ is decreasing sequence in $m$ for any
fixed $k>m$ (this properties we can obtain arguing in the same way
as in Theorem \ref{T:wp}).
  Using  this with
(\ref{E:Kigna}) and (\ref{E: lucmansahman}), for any $t\in
J_{t_{1/2}}$ we get
\begin{align*}
 K ^{m,k}(t) \le C   \int_0^t ( K ^{m-1,k-1 }(t_1) +K ^{m,k}(t_1))\dd t_1 +a_{m,k}.
\end{align*}
 By the Gronwall  inequality, for any $t\in [0,t_{1/2}]$
we have
\begin{align*}
 & K ^{m+p,k+p}(t) \le C   \int_0^t K^{m+p-1,k+p-1}(\sigma_1)e^{Ct_{1}} \dd \sigma_1 +Ca_{m+p,k+p}  \\ &\le C^2
  \int_0^t\int_0^{\sigma_1} K^{m+p-2,k+p-2}(\sigma_2)e^{C\sigma_{1}}e^{C\sigma_{2}}
\dd \sigma_2\dd \sigma_1
 \\ & \quad
+Ce^{ C t_{1/2}} a_{m+p-1,k+p-1} +Ca_{m+p,k+p}\\
& \le C^3
  \int_0^t\int_0^{\sigma_1}\int_0^{\sigma_2} K^{m+p-3,k+p-3}(\sigma_2)e^{C\sigma_{1}}e^{C\sigma_{2}}e^{C\sigma_{3}}
\dd \sigma_3\dd \sigma_2\dd \sigma_1   \\ & \quad +C\frac{e^{ 2C
t_{1/2}}}{2} a_{m+p-2,k+p-2}+Ce^{ C t_{1/2}} a_{m+p-1,k+p-1}
+Ca_{m+p,k+p}
\\ &  \le C^p \int_0^t\int_0^{\sigma_1} \cdots \int_0^{\sigma_{p-1}}
K^{m,k}(\sigma_p)e^{C\sigma_{1}+C\sigma_{2}+\cdots+C\sigma_p} \dd
\sigma_p\cdots \dd \sigma_2\dd \sigma_1 \\ & \qquad\qquad  +    \sum
_{j=0}^{p-1}C\frac{(e^{ C t_{1/2}})^j }{j!} a_{m+p-j,k+p-j}   .
\end{align*}
 Thus,  we derive
\begin{align*}
  K ^{m+p,k+p}& \le \frac{ C e^{ p C}}{p !} \max_{t\in
  [0,T]}K^{m,k}+ Ca_{m,k} .
\end{align*}
Hence, $w_m$ is a convergent sequence in $C ([0,t_{1/2}],H^{s-1}
(\hat D))$. In the same way we can get the convergence  of $w_m$
in $C ([t_{i-1/2},t_{i+1/2}],H^{s-1} (\hat D))$.   Finally,
the fact   $w_m\in C(\R_+,H^{s-1}  (D) )$ implies that $w_m$
converges to some $w^*$  in $C (\R_+,H^{s-1} ( D) )$.  The
convergence of $w_m$ implies the convergence of $u^m $ to some
$u^*$ in $\XX^ {s}(D)$. We have
\begin{align}
\rot u^*&=  w^*,\label{st3:verj1}\\
 \diver u^* &=0 ,\label{st3:verj2}\\
 u^*(x,t)
\cdot n(x)&=(u_0(x)\mu(t)+ \overline{u}  (x,t  )  n (x ) \text{ on
} \Gamma\times \R_+.\label{st3:verj3}
\end{align}
Let us show that
\begin{align}
 w^*(\cdot,t)&=\sum_{l=i+1}^\infty {w{^*}}^l(\cdot,t)\quad\text{ for }
 t\in[t_{i-1/2},t_{i+1/2}], \label{st3:verj4}
\end{align}
 where ${w{^*}}^l$ is the solution of
\begin{align}
&\p_t{w{^*}}^l+\langle \tilde {u}^*,\nabla\rangle {w{^*}}^l = - (\diver \tilde u^* ){w{^*}}^l\text{ in }\hat {D}\times
\R_+,\\
&{w{^*}}^l(0)=\kappa^l\rot( \pi u_0). \label{st3:verj5}
\end{align}
To this end, recall that
 \begin{align*}
  w_m(\cdot,t)&=\sum_{l=i+1}^\infty w^l_m(\cdot,t),\quad\text{ when }
  t\in[t_{i-1/2},t_{i+1/2}],
  \end{align*}
where $w^l_m$ is the solution of
\begin{align*}
&\dot{w}_m^l+\langle \tilde {u}_{m-1},\nabla\rangle w^l_m =-(\diver \tilde {u}_{m-1}) w^l_m\text{ in }\hat {D}\times \R_+,\\
&w^l_m(0)=\kappa^l\rot( \pi u^{m+1}_0).
\end{align*}
We have that $w^l_m \ri {w{^*}}^l $ in $ C(\R_+, H^{s-1} (\hat
D))$ uniformly with respect to $l$ as $m\ri \infty$  (this can be
proved in the same way as in the proof of the convergence of
$w_m$). Thus we have (\ref{st3:verj4}). Clearly (\ref{st3:verj1})-(\ref{st3:verj5})
imply that $u^*$ is a solution of the Euler system
(\ref{1:1})-(\ref{1:3}).

 As in (\ref{E:wlignahata}), using
(\ref{st3:verj1})-(\ref{st3:verj5}) for any
$t\in[t_{i-1/2},t_{i+1/2}]$ and (\ref{1:expdecay}), we can show
that
\begin{align*}\sum_{l=i}^\infty \| {w{^*}}^l(t)\|_{s-1,\hat D}&\le   C \sum_{l=i}^\infty
\exp(   Ci^2  ) \| \kappa^l\rot( \pi u_0  ) \|_{s-1,\hat D}\\&\le
C\sum_{l=i}^\infty \exp  ( C  i^2)  \exp  (  -Ci^{2+\beta}).
\end{align*}
Thus
\begin{align}\label{E:3.41}
\lim_{t\ri \infty } \|u^*(t)- \overline{u}(t) \|_{s  ,  D}=0.
\end{align}
Combining this with (\ref{E:sbalizard}), we see that the first two terms on the left-hand side of
(\ref{1:karavardy}) go to zero as $t\ri \infty$.
Recall that
\begin{align*}
 \Delta p^*&=-\diver (\langle u^*,\nabla\rangle u^*) \\
 \frac{\p p^*}{\p n}&= -(\langle u^*,\nabla\rangle u^*) \cdot n.
\end{align*}
 Thus,  Proposition \ref{T:poisMaz} implies
$\lim_{t\ri\infty}\|\nabla p^*(t)\|_{s-1}=0$.  This completes the proof of
  Theorem \ref{T:himnth}.

\end{proof}

 \section {Construction of the particular solution}\label{S:3}

\subsection{Proof of Proposition \ref{P:Tetakar} }\label{S:5}
 We have the following simplified version of Proposition~\ref{P:Tetakar}.
\begin{lemma}\label{L:2}For any  $x_0\in   \overline D$ there exist a function
 $ \theta \in C^\infty( [0,1],  \dot H^{s +1}(\hat D))$ and a constant $\lambda>0$ such that
\begin{align}
&\Delta\theta=0 \qquad\text{in}\quad D\times [0,1],\label{2:Thetaharmon}\\
&\frac{\p\theta}{\p n} =0\qquad \text{on}\quad (\Gamma\setminus \Gamma_0)\times [0,1] ,\label{2:Thetaezr}\\
&\supp\theta \subset \hat D\times (0,1),\label{2:Thetakrich}\\
&\phi^{\nabla\theta+(c,0 )}(x_0,1 )\notin\overline D \text{ for any } |c|<\lambda.\label{2:thetaverj1}
\end{align}
\end{lemma}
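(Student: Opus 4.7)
My approach will be to reduce the construction of $\theta$ to prescribing a curve in $\overline{\tilde D}$ that carries $x_0$ out of $\overline D$, and then to realize this curve as a streamline of $\nabla\theta$ by combining two Green functions $G_a$ from Proposition \ref{T:Green}.

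First I would fix a smooth path $\gamma\in C^\infty([0,1],\overline{\tilde D})$ with $\gamma(0)=x_0$, $\gamma(1)\in \tilde D\setminus\overline D$, exiting $\overline D$ through $\Gamma_0$, and satisfying $\gamma'(t)=0$ in neighborhoods of $t=0$ and $t=1$. Such a curve exists because $\tilde D\setminus\overline D$ is non-empty and connected to any $x_0\in\overline D$ through $\Gamma_0$. The target is to build $\theta$ so that $\nabla\theta(\gamma(t),t)=\gamma'(t)$ for all $t\in[0,1]$; then $\gamma$ is a flow line of $\nabla\theta$ starting at $x_0$, so $\phi^{\nabla\theta}(x_0,1)=\gamma(1)\notin\overline D$, which gives (\ref{2:thetaverj1}) for $c=0$.

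For the building blocks, pick two points $a_1,a_2\in\tilde D\setminus\overline D$ (in the two bumps of $\tilde D$) disjoint from $\gamma([0,1])$, and fix cutoffs $\chi_j\in C_0^\infty(\tilde D)$ with $\chi_j\equiv 1$ in a neighborhood of $\gamma([0,1])\cup\overline D$ and $\chi_j\equiv 0$ near $a_j$. By Proposition \ref{T:Green}, $\chi_j G_{a_j}\in C_0^\infty(\hat D)\subset \dot H^{s+1}(\hat D)$, and it coincides with $G_{a_j}$ on a neighborhood of $\overline D\cup\gamma([0,1])$; consequently it is harmonic in $D$ and satisfies the Neumann condition on $\Gamma\setminus\Gamma_0$. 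I then look for
\begin{equation*}
\theta(x,t)=c_1(t)\,\chi_1(x)G_{a_1}(x)+c_2(t)\,\chi_2(x)G_{a_2}(x),
\end{equation*}
with $c_j\in C^\infty([0,1])$ determined by the $2\times 2$ linear system
\begin{equation*}
c_1(t)\nabla G_{a_1}(\gamma(t))+c_2(t)\nabla G_{a_2}(\gamma(t))=\gamma'(t).
\end{equation*}
Conditions (\ref{2:Thetaharmon})--(\ref{2:Thetakrich}) follow from the choice of the cutoffs and from requiring $c_j$ to vanish near $t=0,1$, which is compatible with the vanishing of $\gamma'$ there.

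The main obstacle is the uniform invertibility of the above $2\times 2$ matrix along the path $\gamma$, i.e.\ showing that one can choose $a_1,a_2\in\tilde D\setminus\overline D$ so that $\nabla G_{a_1}(\gamma(t))$ and $\nabla G_{a_2}(\gamma(t))$ are linearly independent for every $t\in[0,1]$. I would exploit the real-analytic dependence of $G_a(x)$ on $a$ provided by assertion (iv) of Proposition \ref{T:Green}: for each fixed $t$, the map $a\mapsto \nabla G_a(\gamma(t))$ is real-analytic on $\tilde D\setminus\{\gamma(t)\}$, and the explicit leading term in (\ref{0:grgnahat2}) shows this map is not constant and in fact its image spans $\R^2$ as $a$ varies in the bump. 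Therefore the set of ``bad'' pairs $(a_1,a_2)$ along which the two gradients are parallel is contained in a proper real-analytic subvariety. For each $t$ this bad set is small; by compactness of $[0,1]$ and analyticity in $(a_1,a_2,t)$, a generic choice of $a_1,a_2$ avoids the bad set for all $t$ simultaneously. Then $c_1,c_2\in C^\infty([0,1])$ are obtained by Cramer's rule.

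Finally, for the drift perturbation, note that the vector field $\nabla\theta+(c,0)$ depends continuously on $c$ in $C^0(\overline D\times[0,1];\R^2)$, and its flow depends continuously on the field in $C^0([0,1];\overline{\hat D})$ by Gronwall. Since $\gamma(1)=\phi^{\nabla\theta}(x_0,1)$ lies in the open set $\hat D\setminus\overline D$, there exists $\lambda>0$ such that $\phi^{\nabla\theta+(c,0)}(x_0,1)\in\hat D\setminus\overline D$ for all $|c|<\lambda$, which is exactly (\ref{2:thetaverj1}).
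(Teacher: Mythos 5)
Your overall strategy (steer $x_0$ out of $\overline D$ along a prescribed path, using gradients of cut-off Green functions $G_a$ as building blocks, then absorb the drift $(c,0)$ by continuity of the flow) matches the paper's, but the way you realize the path contains a genuine gap. You insist on making $\gamma$ an \emph{exact} integral curve of $\nabla\theta$ by solving, for every $t$, the $2\times2$ system $c_1(t)\nabla G_{a_1}(\gamma(t))+c_2(t)\nabla G_{a_2}(\gamma(t))=\gamma'(t)$ with a \emph{single} pair $(a_1,a_2)$ fixed once and for all. Two problems. First, the genericity argument does not deliver simultaneous invertibility for all $t$: for each fixed $t$ the bad set $Z_t=\{(a_1,a_2):\det[\nabla G_{a_1}(\gamma(t)),\nabla G_{a_2}(\gamma(t))]=0\}$ is indeed a proper analytic subvariety, but $\bigcup_{t\in[0,1]}Z_t$ is an uncountable union of such sets and can perfectly well cover the whole parameter space (compare the zero sets of $f(a,t)=a-t$); neither compactness of $[0,1]$ nor joint analyticity rules this out, and you give no quantitative transversality argument. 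Second, and more decisively, the linear-independence claim is \emph{false} whenever $\gamma(t)\in\Gamma\setminus\Gamma_0$: there $\partial G_{a_j}/\partial n=0$ for both $j$, so the two gradients are both tangent to $\Gamma$ and hence parallel, and the matrix is singular. For $x_0\in\Gamma\setminus\Gamma_0$ (which the lemma must cover, since $x_0$ ranges over all of $\overline D$) any admissible path necessarily runs along $\Gamma\setminus\Gamma_0$ for an initial time interval, because $\nabla\theta$ is tangent to that part of the boundary; your construction therefore breaks down exactly where it is needed.

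The paper sidesteps both issues. It never realizes the path exactly: Step 1 only shows that at each point of $D\cup\Gamma_0$ the attainable gradients $\{\nabla\xi(x_0):\,\xi\in\aA\}$ span $\R^2$ --- proved by the same analyticity-plus-singularity argument on $G_a$ that you invoke --- and Step 2 then follows Coron's scheme to build $\theta=\sum_{i}\xi_i(x)h_i(t)$ from finitely many building blocks whose flow stays within $\e$ of the reference path; $\e$-closeness of the endpoint to a point of the open set $\hat D\setminus\overline D$ already gives (\ref{2:thetaverj1}). Using many local pairs instead of one global pair removes the need for uniform invertibility along the whole path. The boundary case $x_0\in\Gamma\setminus\Gamma_0$ is handled separately in Step 3 by first translating $x_0$ into $\Gamma_0$ with the explicit harmonic potential proportional to $x_1$, and only then applying the interior construction. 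To repair your proof you would need at least these two ingredients: a localization in time (or a quantitative lower bound on the determinant along $\gamma$) and a separate treatment of starting points on $\Gamma\setminus\Gamma_0$.
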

This lemma is proved at the end of this subsection.
\begin{proof}[Proof of Proposition \ref{P:Tetakar}]
It follows from  Lemma \ref{L:2}   that there  are functions $
\tilde \theta^i \in C^\infty( [0,1],\dot H^{s +1}(\hat D))$ and
open balls $B^i=B(x_i,r_i)\subset \R^2$, $i=1,\ldots,N$  covering
the rectangle $[0,1]\times[-1,1]$ such that properties
 (\ref{1:Thetakrich})-(\ref{1:12}) and  (\ref{1:thetaverj1})    are verified for $\tau_i=1$. For $i=1,\ldots,N$ let us
take
\begin{align}
 \tau_i:&= {i } \sup_{t\in [0,1]}\|{\nabla\tilde
\theta^i(\cdot,t)} \|_{s, \hat D}  ,\label{2:tauinshan}\\
\theta^i  (x,t ):&=\frac{\tilde
\theta^i(x,\frac{t}{\tau_i})}{\tau_i}\label{2:thetainshanak1}.
\end{align}
  Then $B^i, \tau_i$ and $\theta^i$
 verify (\ref{E:3
gnd1})-(\ref{1:thetaverj1}) for $i=1,\ldots,N$. Moreover, there
are closed balls $\tilde B_1, \tilde B_2 \subset \hat D \setminus
\overline D$ such that
\begin{align*}
&{\cup_{i=1}^N \phi^{\nabla\theta^i+(c,0)}(B^i,\tau_i)}\subset
\tilde B_1 \cup \tilde B_2.
\end{align*}
We denote $ B^{2kN+j}:=B   (x_j,r_j)+(k,0)$ and $ B^{
(2k+1)N+j}:=B (x_j ,r_j)-(k+1,0)$, $j=1,\ldots,N$. Then properties
(\ref{E:3 gnd1}) and (\ref{E:3 gnd2}) are satisfied.
 Let $h \in C^\infty([0,1])$ be such that
\begin{align*}
h(t)&= 0\quad\,\,\,\text{for any } t\in[0,1/4], \\
h(t)&=1\quad\,\,\,\text{for any } t\in[3/4,1],\\
 |h(t)| &\le 1\quad\,\,\,\text{for any } t\in[0,1].
\end{align*}
 For
any $x=(x_1,x_2)\in \hat D$ and $c\in \R$ define
\begin{align}
\tilde \theta^{2kN+j}(x,t)&=
\begin{cases}
 (-k-c)x_1 h'(t) & \text{for } t\in[0,1], \\
\tilde \theta^j(x,t-1) & \text{for }  t\in[1,2].
\end{cases}\label{E:taliqverjsd}
\end{align}
 It follows from the constructions of $\tilde \theta^j$,
$j=1,\ldots,N$  that (\ref{1:Thetakrich})-(\ref{1:12}) are
verified for $\tau_i=2$.  It is easy to see  that for any
$t\in[0,1]$ we have \begin{align} \phi^{\nabla\tilde
\theta^{2kN+j}+(c,0)}(x,t)= (-k-c,0)h(t)+ (c,0)t+x.
\label{E:taliqverj}
\end{align} Thus $\phi^{\nabla\tilde
\theta^{2kN+j}+(c,0)}(B^{2kN+j},2)= \phi^{\nabla\tilde \theta^{
j}+(c,0)}(B^{ j},1)\not\subset \overline D$, which implies
(\ref{1:thetaverj1}) and (\ref{1:thetaverj1ar22}).     Notice that
$\nabla\tilde \theta^{i} \in \XX^s(\hat D) $. In order  to have
also (\ref{1:thetaverj2}) and (\ref{1:tauignah}), we   define
$\tau_i  $   by (\ref{2:tauinshan}) and
\begin{align}
\theta^i  (x,t ):&=\frac{2\tilde
\theta^i(x,\frac{2t}{\tau_i})}{\tau_i}\label{2:thetainshanak}.
\end{align}
  This completes the proof.
\end{proof}

\begin{proof}[Proof of Lemma \ref{L:2}] The proof   is based on the ideas  of \cite[Lemma~A.1]{CorNS}.

\vspace{6pt}  \textbf{Step 1.} We denote by $\aA$ the vector space
of functions $ \xi \in \dot H^{s+1}( \hat D )$ with the following
properties
\begin{align}
 & \Delta\xi=0  \text{ in }  D,\nonumber\\
   &\frac{\p\xi}{\p n} =0  \text{ on }   \Gamma\setminus \Gamma_0,
   \nonumber\\
   &\supp \xi \subset \hat D.
\end{align}
First, let us show that for any $x_0\in \overline D$ we have
\begin{align}
\R^2=   \{&\nabla\xi(x_0):\,\xi\in \aA\}.
\label{2:R2inerkay}
\end{align}
 Suppose that (\ref{2:R2inerkay}) does not hold. Then, there is a  vector $V\in\R^2$, $ V\neq 0$ such that
$$V\cdot\nabla\xi(x_0)=0 $$
for all $\xi \in \aA$. Let $ \tilde D$ be the domain defined in
(\ref{0:0:dtildisahm}) and let $ \tilde D\subset D_1$. Take any
$a\in \tilde D\setminus \overline D$, and let $G_a$ be the
solution of (\ref{0:G1}), (\ref{0:G2}). Let $ B_1,B_2 \subset
\tilde D\setminus  \overline D  $ be two open neighborhoods of $a$
such that $ \overline B_1 \subset   B_2$ and let $ \rho\in
C^\infty(\tilde D) $ be such that
\begin{align*}\rho(x)=
\left\{
  \begin{array}{ll}
    1, & \hbox{\text{if} $x\notin \overline {B_2}$,} \\
    0, & \hbox{$x\in B_1$.}
  \end{array}
\right.
\end{align*}
Clearly $ \pi (\rho G_a )\in \aA$, thus $ V\cdot \nabla\pi( \rho
G_a) (x_0) =0$. Since $ x_0 \notin \overline B_2$, we have
    \begin{align}\label{2:Vangama}
V\cdot \nabla G_a(x_0) =0
\end{align}
for all $ a \in \tilde D\setminus \overline D$. On the other hand  $ G_a$ is analytic in
 $a\in \tilde D \setminus \{x_0\}$ (see Proposition \ref{T:Green}, (iii)). Thus,
 we have  (\ref{2:Vangama})   for all $a \in \tilde D\setminus \{x_0\}$.  Using
(\ref{0:grgnahat2}), one can find a sequence $a_n\ri x_0$ such that
$V\cdot\nabla G_{a_{n}} (x_0 )\ri\infty $ as $n\ri \infty$, which
is a contradiction to $V\neq0$.

\vspace{6pt}  \textbf{Step 2.}  Take any $x_0\in D\cup \Gamma_0$,
$x^1\in\hat D\setminus \overline D$ and let   $F:   [0,1] \to
 \hat D $ be a continuous function such that
\begin{align}
F(t)&=x_0\quad\,\,\,\,\,\,\text{for any } t\in[0,1/4],\nonumber\\
F(t)&=x^1\quad\,\,\,\,\,\,\text{for any }
t\in[3/4,1],\nonumber\\
F(t)&\notin \Gamma\setminus\Gamma_0\,\,\,\text{for any } t\in
[0,1]\nonumber.
\end{align}
Then for any $\e>0$ we can find $\xi_i\in \aA$, $h_i\in
C^\infty ([0,1])$, $i=1,\ldots,k$   with $\supp h_i\subset
[1/4,3/4]$ such that for
$\theta(x,t):=\sum_{i=1}^k\xi_i(x)h_i(t)$ we have
\begin{align}\label{2:F(t)}
|F(t)-\phi^{\nabla\theta}(x_0,t )|<\e
\end{align}
for any $t\in [0,1]$. It is easy to see that there is a constant
$\lambda>0$ such that for any $|c|<\lambda$
\begin{align}\label{2:phi}
|\phi^{\nabla\theta}(x_0,t )-\phi^{\nabla\theta+(c,0)}(x_0,t
)|<\e.
\end{align}
  Since   $\xi_i\in \aA$  and $\supp h_i\subset
[1/4,3/4]$, we have (\ref{2:Thetaharmon})-(\ref{2:Thetakrich}).
 The construction of $F$, inequalities
(\ref{2:F(t)}) and (\ref{2:phi}) imply $\phi^{\nabla\theta+ (c,0 )} (x_0,1
)\notin~\overline D$ for sufficiently small $\e>0$.

\vspace{6pt}  \textbf{Step 3.} It remains to study the case
$x_0\in \Gamma\setminus\Gamma_0$. Let $y_0\in \Gamma_0$ and $k\in
\R$ be such that $x_0=y_0+(k,0)$. Then, the function
\begin{align*}
  \theta (x,t)&=
\begin{cases}
 (-c-k) x_1 h'(t) & \text{for } t\in[0,1/2], \\
  2\theta_{y_0}(x,2(t-1/2)) & \text{for }  t\in[1/2,1]
\end{cases}
\end{align*}
satisfies (\ref{2:Thetaharmon})-(\ref{2:thetaverj1}), where $h\in C^\infty([0,1/2])$ is any function with $ h(0)=0,$ $h(1/2)=1$ and $\theta_{y_0}$ is the function constructed in Step 2 for $y_0\in\Gamma_0$.
\end{proof}

\subsection{Proof of Proposition \ref{L:flow}}
For any $m\in \R_+$, let us denote
\begin{align}
D^m_-:=(-\infty,-m]\times[-2,2] \text{ and } D^m_+:=[m,+\infty)\times [-2,2].
\end{align}
We shall need the following lemma.
\begin{lemma}\label{L:flow11}  The functions $ \theta^i  $ constructed in the proof of  Proposition \ref{P:Tetakar} are such that there exist  $\ph^i\in C(\R_+)$ with
\begin{align}
&\sup_{x\in\overline D }|\phi^{\nabla\theta^i + (c,0 ) }
(x,t)-x|\le \bigg[ \frac{i}{2N}\bigg]+M \text { for any }
t\in[0,\tau_i]\label{1:thetaverj22},\\
 & |\nabla\theta^i(x,t)-\nabla\theta^i(y,t)| \le\frac{\ph^i(t)}{(m+1)^2} |x-y| \text{ for any }x ,y\in D^m_+  \text { or } x,y\in D^m_- ,
  \label{1:thetaverj}\end{align} where  $ \int _0^{\tau_i}\ph^i(t)\dd
t \le M $, $ [ \frac{i}{2N} ]$ is the integer part of $
\frac{i}{2N}$ and $M\in \R$ does not depend on $i$.
\end{lemma}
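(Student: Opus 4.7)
My approach separates the two assertions, both flowing from the explicit structure of the $\theta^i$ constructed in the proof of Proposition~\ref{P:Tetakar}. Recall that for $i=2kN+j$ (and analogously for $(2k+1)N+j$), the function $\tilde{\theta}^i$ consists of a linear-in-$x_1$ drift phase $(-k-c)x_1 h'(\cdot)$ followed by a translate of one of the finitely many base functions $\tilde{\theta}^j$, $j\in\{1,\ldots,N\}$; after the rescaling $\theta^i(x,t)=(2/\tau_i)\tilde{\theta}^i(x,2t/\tau_i)$, this two-phase structure is inherited on $[0,\tau_i]$.

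For the displacement bound (\ref{1:thetaverj22}), I would exploit the explicit flow formula (\ref{E:taliqverj}). Direct computation via change of variables gives that in the drift phase the $x_1$-component of $\phi^{\nabla\theta^i+(c,0)}(x,t)-x$ equals $(-k-c)h(2t/\tau_i)+ct$, whose supremum on $[0,\tau_i/2]$ is bounded by $k+|c|(1+\tau_i/2)$. In the main phase the velocity field $(2/\tau_i)\nabla\tilde{\theta}^j$ has trajectories that differ from those of $\nabla\tilde{\theta}^j+(c,0)$ on $[0,1]$ only by a bounded correction, so the additional excursion is bounded by some $C_0$ depending only on the finite family $\{\tilde{\theta}^j\}_{j\le N}$. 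Using $\tau_i\le Mi$ from (\ref{1:tauignah}) and shrinking $\lambda$ in Proposition~\ref{P:Tetakar} if necessary so that $\lambda M<1/(2N)$, the accumulated drift contribution $|c|\tau_i$ is absorbed into the $[i/(2N)]$ term, leaving the uniform bound $[i/(2N)]+M$.

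For the Lipschitz decay (\ref{1:thetaverj}), the pivotal observation is that during the drift phase $\nabla\theta^i=(2/\tau_i)(-k-c)h'(2t/\tau_i)(1,0)$ is independent of $x$ and contributes zero to any Lipschitz seminorm; hence only the main phase must be estimated. By Step~1 of Lemma~\ref{L:2}, each $\tilde{\theta}^j$ is a finite linear combination $\sum_l h_l(s)\,\xi_l(x)$ with $\xi_l=\pi(\rho\,G_{a_l})$ built from Green functions of Proposition~\ref{T:Green} with poles $a_l\in\tilde{D}\setminus\overline{D}$. On $D$, $\xi_l\equiv G_{a_l}$ (since $\rho\equiv 1$ there), and Proposition~\ref{T:Green}(iii) yields $\partial_i\partial_j G_{a_l}\in\sS(D)$, whence $\sup_{D^m_\pm\cap D}|\nabla^2\xi_l|\le C_l(m+1)^{-2}$; taking $\pi$ to be a standard reflection-type extension across $\{x_2=\pm 1\}$ composed with a smooth $x_2$-cutoff propagates this decay to the thin strips $\hat{D}\setminus D$, hence to all of $D^m_\pm$. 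The mean value inequality then produces the Lipschitz bound with $\varphi^i(t):=(2/\tau_i)\sum_l C_l\,|h_l(2t/\tau_i-1)|$ on the main phase and $0$ elsewhere; the change of variables $s=2t/\tau_i-1$ collapses $\int_0^{\tau_i}\varphi^i(t)\,dt$ to $\sum_l C_l\int_0^1|h_l(s)|\,ds$, uniformly bounded over the finite set $j\in\{1,\ldots,N\}$.

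The main obstacle I anticipate is the calibration of $\lambda$: one must retroactively verify that shrinking $\lambda$ to absorb the linear-in-$i$ drift correction does not invalidate properties (\ref{1:thetaverj1}) and (\ref{1:thetaverj1ar22}) of Proposition~\ref{P:Tetakar}. This is however only a finiteness check on the constants produced by Lemma~\ref{L:2}, which are independent of $i$. A secondary subtlety lies in confirming that the Schwartz-type decay of $\partial^2 G_{a_l}$ in $x_1$ genuinely survives the extension operator $\pi$ on the strip $\hat{D}\setminus D$; once $\pi$ is chosen to act locally in $x_2$ (as in reflection extensions), this is automatic since pointwise values of $\pi u$ at $x_1$ depend only on values of $u$ near $x_1$.
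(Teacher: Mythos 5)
Your overall strategy is the paper's: read (\ref{1:thetaverj22}) off the explicit two-phase flow formula, and obtain (\ref{1:thetaverj}) by strengthening (\ref{2:R2inerkay}) so that the generators $\xi$ also satisfy $\|x_1^2\p^\beta\xi\|_{s,\hat D}<C(x_0)$ for $|\beta|=2$, which is exactly what Proposition \ref{T:Green}(iii) ($\p_i\p_jG_a\in\sS(D)$) supplies. Your treatment of the Lipschitz half is sound and essentially identical to the paper's (the substitution $s=2t/\tau_i-1$ cancelling the $2/\tau_i$ prefactor in $\int_0^{\tau_i}\ph^i$ is the same scaling the paper relies on), and your remark that $\pi$ must be chosen local in $x_2$ so the decay of $\p^2 G_a$ survives on $\hat D\setminus D$ addresses a point the paper leaves implicit.

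The gap is in the displacement bound. You correctly integrate the constant drift and get the term $ct$ on $[0,\tau_i/2]$ (note that the paper's own formula in this proof writes $(c,0)\tfrac{2t}{\tau_i}$ there, which is the flow of $\nabla\theta^i+\tfrac{2}{\tau_i}(c,0)$, not of $\nabla\theta^i+(c,0)$; your version is the honest one for the rescaled field (\ref{2:thetainshanak})). But the claim that the resulting contribution $|c|\tau_i/2$ is ``absorbed into the $[i/(2N)]$ term'' after imposing $\lambda M<1/(2N)$ does not work: the term $[i/(2N)]$ on the right of (\ref{1:thetaverj22}) is already fully consumed by the deliberate translation $(-k,0)$, $k=[i/(2N)]$, that the first phase is designed to produce, so what must be bounded by the constant $M$ is $|c|(1+\tau_i/2)+C_0$. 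By (\ref{1:tauignah}) this grows like $\lambda Mi/2$, which is unbounded in $i$ for every fixed $\lambda>0$; at best you obtain $(1+\e)[i/(2N)]+M$, not the stated inequality, and the same uncompensated $\tau_i|c|/2$ reappears in your ``main phase,'' where you assert a bounded correction without justification. The consistent repair is to compensate the drift over the actual duration of the phase, i.e.\ to use $(-k-c\tau_i/2)x_1h'(t)$ in (\ref{E:taliqverjsd}) so that the first phase of $\phi^{\nabla\theta^i+(c,0)}$ ends exactly at $x+(-k,0)$, and to treat the second phase as the flow of $\nabla\tilde\theta^j+\tfrac{\tau_i}{2}(c,0)$ over $[0,1]$ (which in turn constrains how $\lambda$ and $\tau_i$ interact in (\ref{E:taliqverj})). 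As written, your bookkeeping does not close, and the constant $M$ you produce depends on $i$.
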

\begin{proof}It is easy to see that  (\ref{E:taliqverjsd}) and (\ref{2:thetainshanak}) imply
\begin{align*}
\phi^{\nabla\theta^i + (c,0 ) } (x,t)&=
\begin{cases}
 (-k-c,0)h(\frac{2t}{\tau_i})+ (c,0)\frac{2t}{\tau_i} +x& \text{for } t\in[0,\tau_i/2], \\
\phi^{\nabla\tilde \theta^j+(c,0)}(x,\frac{2t}{\tau_i}-1) & \text{for }
t\in[\tau_i/2,\tau_i].
\end{cases}
\end{align*}
where $k=\big[ \frac{i}{2N}\big]$ and $j= i-2Nk$. This yields
(\ref{1:thetaverj22}) for a sufficiently large $M$. To prove
(\ref{1:thetaverj}), notice that in the proof of Lemma \ref{L:2},
the functions $\theta$ can be chosen  such that
$$\| x_1^2\p^\beta\theta \|_{s,\hat D}<C(x_0), $$ where
$|\beta|=2$. Indeed, since Proposition \ref{T:Green} implies that
the second order derivatives of $G_a$ belong  to $\sS(D)$, one can
replace (\ref{2:R2inerkay}) by
\begin{align*}
\R^2=   \{&\nabla\xi(x_0):\,\xi\in \aA\text{ and } \|
x_1^2\p^\beta\xi \|_{s,\hat D}<C(x_0), \quad |\beta|=2 \}.
\end{align*}
Hence, we can find a constant $M_1$ such that
\begin{align*} \sup_{i=1,\ldots,N, |\beta|=2 }\int_ 0^ {1}\|x_1^2\p^\beta \tilde \theta^i(t,\cdot) \|_{L^\infty
(\hat D)}\dd t <M  _1  .
\end{align*}
 Combining this with (\ref{E:taliqverjsd})  and (\ref{2:thetainshanak}), we get (\ref{1:thetaverj}).
\end{proof}

Now we  return to the proof of Proposition \ref{L:flow}. It
suffices to show that for any $\e>0$ there is   $\nu>0$ such that
the inequality
 \begin{align} \label{E:Lemiaraj}
 \sup _{x\in B_i}|\phi^{ {u}}(x,t )-\phi^{\overline{u}}(x,t) |\le \e
\end{align}
holds for any    $i\ge 1$ and $t\in[0,t_{i-1/2}]$.
Let us denote
\begin{align*}
X(t)=\phi^{ {u}}(x,t) , \\
Y(t)=\phi^{\overline{u}}(x,t) ,
\end{align*}
where $x\in B^i  $. We shall prove (\ref{E:Lemiaraj}) in the case
when $i$ is even. The proof when $i$ is odd is similar. Let
$k:=\left[\frac{i}{2N}\right]$, then
 \begin{align}\label{1:lem2.312}
   B^i \subset [k-2,k +3]\times [-2,2].
\end{align}
\vspace{6pt}  \textbf{Step 1.}
 First let us show that to establish  (\ref{E:Lemiaraj}) it suffices  to prove that
\begin{align}\label{1:anhra11}
|X(t )-Y(t )|<1 \text { for all } t\in \R_+.
\end{align}
 It is easy to see that
 \begin{align}\label{1:lem2.2}
 &\p_t\left( X(t )-Y(t ) \right)=  {u}(X(t ),t)-\overline{u}(Y(t ),t)\nonumber\\&
 = ({u}(X(t ),t)-\overline{u}(X(t ),t))+(\overline{u}(X(t ),t)-\overline{u}(Y(t ),t))=:I_1(t)+I_2(t).
\end{align}
We have that
\begin{align}\label{1:lem2.44}
\int_0^\infty |I_1(t)|\dd t\le \nu.
\end{align}
 From (\ref{E:Tgcikar1}), (\ref{E:Tgcikar2}),
(\ref{1:thetaverj22}) and (\ref{1:lem2.312}) it follows that
$$ Y(t )\in [k -2-\left[\frac{j} {2N}\right]- M,k +3+\left[\frac{j} {2N}\right]+M]\times[-2,2]$$
for any $t\in[0,t_{j-1/2}]$. Hence, (\ref{1:anhra11}) implies
$$ X(t )\in [k -3-\left[\frac{j} {2N}\right]- M,k +4+\left[\frac{j} {2N}\right]+M]\times[-2,2].
$$
We derive from (\ref{E:Tgcikar1}), (\ref{E:Tgcikar2})
 and (\ref{1:thetaverj})
that
 \begin{align}
 \int_0^{t_{i-1/2}}| I_2(t)| \dd t
\le \int^{t_{i-1/2}}_{0}  \Psi(t)| X(t )- Y(t)| \dd
t,\label{E:4.17}
\end{align}
where
\begin{align*}\Psi(t)=
\left\{
  \begin{array}{ll}
   \frac{ \ph^j(t-t_{j-1})}{(k -2-\left[\frac{j} {2N}\right]- M)^2}, & \hbox{\text{} $t\in[t_{j-1},t_{j-1/2}]$,} \\
     \frac{ \ph^j(t_j-t)}{(k -2-\left[\frac{j} {2N}\right]- M)^2}, &\hbox{\text{} $t\in[t_{j-1/2},t_{j}]$ }
  \end{array}
\right.
\end{align*}
for $j<2N(k-3-M)$ (here we use (\ref{1:thetaverj}) for $m=k
-3-\left[\frac{j} {2N}\right]- M$) and
\begin{align*}\Psi(t)=
\left\{
  \begin{array}{ll}
  \ph^j(t)  , & \hbox{\text{} $t\in[t_{j-1},t_{j-1/2}]$,} \\
  \ph^j(t_j-t)  , &\hbox{\text{} $t\in[t_{j-1/2},t_{j}]$ }
  \end{array}
\right.
\end{align*}
for $ j\ge 2N(k-3-M)$ (in this case we use (\ref{1:thetaverj}) for
$m=0$). Thus we have
 \begin{align}
  \int^{t_{i-1/2}}_{0}  \Psi(t) \dd
t = &\int^{t_{2N(k-3-M)-1}}_{0}  \Psi(t) \dd t
+\int^{t_{i-1/2}}_{t_{2N(k-3-M)-1}} \Psi(t) \dd t \nonumber\\ \le&
\sum_{j=1}^{2N(k-3-M)-1}\frac{2M}{(k -2-\left[\frac{j}
{2N}\right]- M)^2}+(2N(M+4)+1)2M.\label{E:4.127}
\end{align}
 Integrating (\ref{1:lem2.2}), using
(\ref{1:lem2.44})-(\ref{E:4.127}) and the Gronwall inequality, we
obtain
 \begin{align}
| X(t_{i-1/2} )&-Y(t _{i-1/2}) |\nonumber\\& \le \nu
\exp\left(\sum_{j=1}^{2N(k-3-M)-1}\frac{2M}{(k -2-\left[\frac{j}
{2N}\right]- M)^2}+(2N(M+4)+1)2M\right)\nonumber\\& \le \nu
\exp\left(\sum_{j=1}^{\infty}\frac{8MN^2}{j^2}+(2N(M+4)+1)2M\right).\label{E:4.128}
\end{align}
Choosing $\nu$ such     that the right-hand side of
(\ref{E:4.128}) is smaller  than $\e$, we prove (\ref{E:Lemiaraj})
for all $i$.

 \vspace{6pt}  \textbf{Step 2.} To complete the proof, it remains
 to show (\ref{1:anhra11}). To this end, let us  assume that (\ref{1:anhra11}) does not hold for some $t>0$. Denote by $\tilde t_0$ the first
 time
 such that $|X(\tilde t_0)-Y(\tilde t_0)|=1$. Hence, we have (\ref{1:anhra11}) for all $t<\tilde t_0$. Step
 1 implies
 \begin{align}\label{E:4.129}
| X(\tilde t_0)&-Y(\tilde t_0) | \le \nu
\exp\left(\sum_{j=1}^{\infty}\frac{8MN^2}{j^2}+(2N(M+4)+1)2M\right).
\end{align}
Since the right-hand side of (\ref {E:4.129}) does not depend on
$\tilde t_0$, choosing $\nu$ sufficiently small, we get
(\ref{1:anhra11}).

\section{Appendix: proof of Lemma \ref{L:rotdivihamar}} \label{S:0:3}
 Let us consider the  space
$$ H_0(D')=\{z\in L^2(D') :\,\rot z \in L^2(D'), \,\diver z\in L^2(D'),\, z\cdot n|_{\Gamma'}=0
\}
$$  endowed  with the norm $$ \|z\|_{H_0}=\|z\|+\|\rot z\|+\|\diver z\|.
$$
Here $D'$ is a strip   or is the domain $\tilde D$ defined in
(\ref{0:0:dtildisahm}).
 Recall the following result (see \cite[Chapter 7, Theorem 6.1]{Duvlions}).
\begin{theorem}\label{Duvlions} The following equality holds $$\{ z\in H^1(D'):\,  z\cdot n|_{\Gamma'}=0 \} =H_0.$$
\end{theorem}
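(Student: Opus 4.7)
One inclusion is immediate: if $z\in H^1(D')$ with $z\cdot n|_{\Gamma'}=0$, then $\rot z,\diver z\in L^2(D')$, so $z\in H_0(D')$. The content of the theorem is the reverse inclusion together with the corresponding estimate. The plan is to derive, for smooth $z$ satisfying $z\cdot n=0$ on $\Gamma'$, the identity
\begin{equation*}
\|\nabla z\|_{L^2(D')}^2 = \|\rot z\|_{L^2(D')}^2 + \|\diver z\|_{L^2(D')}^2 + (\text{boundary contribution}),
\end{equation*}
and then to show that the boundary contribution vanishes (or is controlled by $\|z\|$).

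The key algebraic fact in two dimensions is
\begin{equation*}
(\rot z)^2 + (\diver z)^2 = |\nabla z|^2 + 2\det(Dz),
\end{equation*}
obtained by expanding squares. Moreover $\det(Dz) = \partial_1 z_1\,\partial_2 z_2 - \partial_1 z_2\,\partial_2 z_1 = \partial_1(z_1\partial_2 z_2)-\partial_2(z_1\partial_1 z_2)$, so integration by parts produces a pure boundary term
\begin{equation*}
-2\int_{\Gamma'}\bigl(z_1\partial_2 z_2\,n_1 - z_1\partial_1 z_2\,n_2\bigr)\,d\sigma.
\end{equation*}
In the strip $D$ the normal is $n=(0,\pm 1)$, so $n_1=0$ kills the first summand, while $z\cdot n=0$ forces $z_2\equiv 0$ on $\Gamma$, hence its tangential derivative $\partial_1 z_2$ vanishes on $\Gamma$ as well, so the second summand is zero too. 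In $\tilde D$ the same computation gives a boundary integral whose integrand can be rewritten, using $z\cdot n=0$ to eliminate the normal component of $z$ on the bumpy part of $\Gamma'$, as a quadratic form in the tangential component of $z$ times curvature-like factors depending on $\gamma$; this is a lower-order term bounded by $C\|z\|\|\nabla z\|$ and absorbed by Young's inequality at the cost of the $\|z\|$ term in the norm of $H_0$.

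For a general $z\in H_0(D')$ the products above are not defined a priori, so I would argue by regularization. First, truncate: cut off by $\chi(x_1/R)$ and work on $D'\cap\{|x_1|<R\}$, noting the integration-by-parts identity above has no $R$-dependent constants. Next, approximate $z$ by mollifications $z_\varepsilon$ built with tangential mollifiers near $\Gamma'$ so as to preserve $z_\varepsilon\cdot n|_{\Gamma'}=0$; such constructions are standard for domains with uniformly smooth boundary, which both $D$ and $\tilde D$ possess. Apply the identity to $z_\varepsilon$, pass $\varepsilon\to0$ using that $z_\varepsilon\to z$, $\rot z_\varepsilon\to\rot z$, $\diver z_\varepsilon\to\diver z$ in $L^2$, and conclude that $\nabla z_\varepsilon$ stays bounded in $L^2$; finally let $R\to\infty$ by monotone convergence. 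Lower semicontinuity of the norm under the resulting weak convergence gives $z\in H^1(D')$ with the quantitative estimate.

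The step I expect to be most delicate is constructing the boundary-preserving mollification on the unbounded domain $\tilde D$: one must produce an approximation $z_\varepsilon\to z$ in $L^2$ with $\rot z_\varepsilon$, $\diver z_\varepsilon$ converging and $z_\varepsilon\cdot n=0$ on $\Gamma'$, uniformly in the $x_1$ direction. This requires a partition of unity along the strip together with local diffeomorphisms flattening $\Gamma'$, and careful book-keeping to ensure the approximation constants do not blow up as $|x_1|\to\infty$. Once this is in place, the rest of the argument is the identity above plus a density/compactness step.
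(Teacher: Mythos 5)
The paper does not prove this statement at all: it is quoted from Duvaut--Lions (Chapter 7, Theorem 6.1), so there is no internal proof to compare against, and what you have written is essentially the classical proof of that cited result. Your computation is correct: $(\rot z)^2+(\diver z)^2=|\nabla z|^2+2\det(Dz)$, $\det(Dz)$ is a divergence, the boundary term vanishes on the flat parts of $\Gamma'$ because $n_1=0$ and $z_2$ (hence its tangential derivative) vanishes there, and on the curved part of $\p\tilde D$ it reduces to a curvature term quadratic in the tangential trace of $z$, absorbed via $\|z\|_{L^2(\Gamma')}^2\le C\|z\|\,\|z\|_{H^1(D')}$ and Young's inequality. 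The truncation in $x_1$ and the uniform geometry of the strip (flat outside a compact set) cause no difficulty.

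The step you have only asserted is the regularization, and as literally described it does not work: mollifying \emph{only} in the tangential directions near $\Gamma'$ preserves $z_\e\cdot n=0$ but gains no regularity in the normal direction, so $z_\e$ is still not in $H^1$ and the identity still cannot be applied to it. This density step is the actual content of the theorem; without it your identity only gives the a priori bound $\|z\|_1\le C(\|z\|+\|\rot z\|+\|\diver z\|)$ on the subspace $\{z\in H^1:\ z\cdot n|_{\Gamma'}=0\}$, not the set equality. Two standard ways to close it: (i) on the flat portions of $\Gamma$, extend $z_1$ evenly and $z_2$ oddly across $x_2=\pm1$; because $z\cdot n=0$ the extension has no surface distributions in its divergence or curl, and interior regularity of fields with $L^2$ divergence and curl (via $\Delta z_1=\p_1\diver z-\p_2\curl z$, $\Delta z_2=\p_2\diver z+\p_1\curl z$) gives $z\in H^1$ up to the boundary; (ii) after flattening the boundary locally, use tangential difference quotients to put the tangential derivatives of $z$ in $L^2$ and then recover the normal derivatives algebraically from $\diver z$ and $\curl z$ --- this is the route actually taken in Duvaut--Lions and in Temam's Appendix I. Either completion is routine, but one of them must be carried out.
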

 In
the case of bounded domains it is shown in \cite[Appendix 1,
Proposition 1.4]{Tembook} that
\begin{align}\label{0:temamcyl}
H^s(\Omega)=\{z\in L^2(\Omega):\, \rot z\! \in\!
H^{s-1}(\Omega),\,\diver z\!\in\! H^{s-1}(\Omega), \, z\cdot n \in
H^{s-1/2}(\p \Omega) \}.
\end{align}
Let us  generalize this result to the case of domain $D'$. We
shall need the following lemma.
\begin{lemma}\label{App:Lemma}
Let $g\in H^{1/2}(\Gamma')$. Then the  problem
\begin{align}
\Delta u-u=0\,\,\, &\text{ in}\quad D',\label{Ap:111}\\
\frac{\p u}{\p n} =g  \,\,\,&\text{ on} \quad
\Gamma' \label{Ap:211}
\end{align}
has a unique solution $u\in H^2(D')$, which satisfies
\begin{align}\label{Ap:anhav} \|u\|_{2}\le C\|g\|_{1/2}.
\end{align}
\end{lemma}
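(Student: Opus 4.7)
The plan is to obtain first an $H^1(D')$ solution by the Lax--Milgram theorem and then to upgrade to $H^2$ regularity by a localization argument that exploits the translation invariance of $D$ in $x_1$ (and the compact support of the boundary perturbation in the case of $\tilde D$).

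First I would set up the weak formulation: $u\in H^1(D')$ solves \eqref{Ap:111}--\eqref{Ap:211} if and only if
$$a(u,v):=\int_{D'}(\nabla u\cdot\nabla v+uv)\,\dd x=\langle g,\gamma_0 v\rangle_{\Gamma'}\qquad\text{for every }v\in H^1(D'),$$
where $\gamma_0$ is the trace operator and $\langle\cdot,\cdot\rangle_{\Gamma'}$ is the duality pairing between $H^{-1/2}(\Gamma')$ and $H^{1/2}(\Gamma')$. The bilinear form $a$ is continuous and coercive on $H^1(D')$ with $a(u,u)=\|u\|_1^2$, and by the continuity of the trace operator together with the continuous embedding $H^{1/2}(\Gamma')\hookrightarrow H^{-1/2}(\Gamma')$ the right-hand side is a continuous linear functional on $H^1(D')$ of norm bounded by $C\|g\|_{1/2}$. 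The Lax--Milgram theorem then yields a unique $u\in H^1(D')$ with $\|u\|_1\le C\|g\|_{1/2}$, settling both existence and uniqueness at the $H^1$ level.

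Next I would promote the solution to $H^2$. Since $\Gamma'$ is smooth and, in the case of $\tilde D$, differs from the straight boundary only in a compact region, I would cover $\overline{D'}$ by a locally finite family $\{U_k\}$ of open sets, each either compactly contained in $D'$ or meeting $\Gamma'$ in a patch that can be straightened by a smooth diffeomorphism $\Phi_k$ whose derivatives of any fixed order are uniformly bounded in $k$. Using a subordinate partition of unity $\{\chi_k\}$, the function $\chi_k u$ satisfies, after straightening, a Neumann problem for $\Delta-I$ on a half-ball modulo commutator terms of order one, with data controlled in the natural norms by $C(\|u\|_1+\|g\|_{1/2})$. The classical $H^2$ Neumann estimate on the half-ball (via tangential difference quotients, or by reflection across the flat boundary followed by an interior elliptic estimate) then gives $\|\chi_k u\|_2\le C(\|u\|_1+\|g\|_{1/2})$, and summing over the locally finite cover produces $u\in H^2(D')$ together with \eqref{Ap:anhav}.

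The main obstacle is to keep the constant $C$ in the local $H^2$ estimate uniform across an infinite cover. For the straight strip $D$ this is handled by translation invariance in $x_1$: one may choose the $U_k$ as translates of a single compact piece, so a single local estimate applies to every $k$. For $\tilde D$ the perturbation of $\Gamma$ has compact support, so outside a large ball the translation-invariant argument applies, and inside the bounded region one invokes the classical $H^2$ Neumann estimate on a bounded smooth subdomain obtained by cutting off. As a cleaner alternative in the strip case, one may instead Fourier transform in $x_1$, reducing the problem to the family of ODEs $\hat u''=(1+\xi^2)\hat u$ on $(-1,1)$ with Neumann data, and read off the estimate uniformly in $\xi$ from the favourable sign of the zeroth-order term.
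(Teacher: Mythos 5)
Your proposal is correct, and the existence/uniqueness step is the same as the paper's (the paper invokes the Riesz representation theorem for the symmetric coercive form, which is your Lax--Milgram argument). Where you diverge is the $H^2$ regularity step. The paper avoids any partition of unity over the infinite strip: for $D'=D$ it simply observes that $\p_1$ commutes with both $\Delta-I$ and the Neumann condition on the flat horizontal boundary, so $v:=\p_1 u$ solves the same problem with data $\p_1 g\in H^{-1/2}(\Gamma)$, giving $\|\p_1 u\|_1\le C\|g\|_{1/2}$ globally in one stroke; then $\p_2^2u=u-\p_1^2u\in L^2$ recovers the full $H^2$ bound from the equation. For $D'=\tilde D$ it uses a single cutoff around the compactly supported boundary perturbation, applies the Agmon--Douglis--Nirenberg estimate on the resulting bounded domain, and then notes that the restriction of $u$ to the flat strip $D$ again satisfies a Neumann problem with $H^{1/2}$ data (the trace of $\p u/\p n$ on $\Gamma_0$ being controlled by the interior $H^2$ regularity just obtained), reducing to Case 1. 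Your route --- a locally finite cover with flattening maps and uniform local Neumann estimates, with translation invariance guaranteeing uniformity of the constants --- is more generic and equally valid, but it re-proves by difference quotients in each patch what the paper gets globally by differentiating once in $x_1$; your Fourier-in-$x_1$ alternative is in fact the closest in spirit to the paper's Case 1 (and to the explicit ODE computation the paper performs elsewhere, in the proof of Proposition 2.4(iii)), correctly identifying that the zeroth-order term $-u$ is what keeps the symbol invertible uniformly down to $\xi=0$. In short: no gap, just a heavier (though more robust) implementation of the regularity step than the paper's.
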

\begin{proof} Problem  (\ref{Ap:111}), (\ref{Ap:211}) is equivalent to
\begin{align*}
\int_{D'} \nabla u \nabla \theta \dd x+\int_{D'}  u  \theta \dd x =\int_{\Gamma} g  \theta \dd \sigma\,
\text { for any }\theta \in H^1(D'). \end{align*}
Since $g\in H^{-1/2}(\Gamma')$, the Riesz representation
theorem implies the existence of a unique solution $u \in   H^1(D') $.

\vspace{6pt}  \textbf{Case 1.} Assume   $D'=D$, and let us prove  that $ u\in H^2(D)$. It is easy to see that $ v:= \p_1 u$ is the
solution of the problem
\begin{align*}
\Delta v-v=0\,\,\, &\text{ in}\quad D, \\
\frac{\p v}{\p n} = \p_1g  \,\,\,&\text{ on} \quad
\Gamma .
\end{align*}
Thus $\p_1 u \in H^1(D)$ and
$$ \|\p_1u\|_{1}\le C\|g\|_{1/2}.$$
Combining this with the fact that $ \Delta u \in H^1(D)$, we obtain $u\in H^2(D)$ and~(\ref{Ap:anhav}).

\vspace{6pt}  \textbf{Case 2.} Now consider the case $D'=\tilde D$. Let
\begin{align*}
\Omega_1:=\{ x\in \tilde D: \, |x_1|< N\} \text{ and } \Omega_2:=\{ x\in \tilde D: \, |x_1|< N+1\},
\end{align*}
where $N$ is so large that
 $\tilde D\setminus D \subset \Omega_1 $. Let us take some function $\chi\in C ^ \infty(\overline {\tilde {D} })  $ such that
 such that
\begin{align*}\chi(x)=
\left\{
  \begin{array}{ll}
    0, & \hbox{\text{if} $x\notin \overline\Omega_{2}$,} \\
    1, & \hbox{\text{if} $x\in\Omega_{
    1}$.}
  \end{array}
\right.
\end{align*}
 Then $ w:=\chi u $ is the solution of
 \begin{align}
\Delta w-w= 2 \nabla \chi\nabla u+\Delta \chi u=:f\,\,\, &\text{ in}\quad \Omega_2,\label{Ap:11}\\
\frac{\p w}{\p n} =:\tilde g  \,\,\,&\text{ on} \quad
\p \Omega_2 .\label{Ap:21}
\end{align}
It is easy to see that $f\in L^2(\Omega_2)$  and $\tilde g \in H^{1/2}(\p \Omega_2)$. This implies that $ w ~\!\in~\! H^2(\Omega_2)$
(e.g., see \cite{ADN}). Thus $u\in H^2(\Omega_1)$. On the other hand,
 from the fact  $\Gamma_0\subset \Omega_1 $ we derive $ \frac{\p u}{\p n}|_\Gamma\in H^{1/2}(\Gamma)$. Hence, using the result for $D'=D$, we see that
 $u\in  H^2(D)$.
 This completes the proof of Lemma \ref {App:Lemma}.
\end{proof}

Now let us prove (\ref{0:temamcyl}) for $\Omega=D'$. Clearly the
space in the left-hand side  is contained in the right-hand side
of (\ref{0:temamcyl}). By induction, let us show the other
inclusion. Assume $s=1$. Let us take some function $z$ from the
right-hand side of (\ref{0:temamcyl}) and   consider the problem:
\begin{align*}
\Delta p-p=0\,\,\, &\text{ in}\quad D, \\
\frac{\p p}{\p n}   =z\cdot n \,\,\,&\text{ on} \quad \Gamma.
\end{align*}
By Lemma \ref {App:Lemma}, we have $p\in H^2(D')$ and  $\|p\|_{2}\le
C  \|z\cdot n\|_{1/2}$ . Let us take  $w=z-\nabla p$.
Clearly $w\in H_0$, thus Theorem \ref{Duvlions} implies $w\in
H^1(D')$. Hence, $ z\in H^1(D')$ and
\begin{align}\label{0:1:3}
\|z\|_1\le  \|w\|_1+ \|p\|_2\le C(\|z\|+\|\rot z\|+\|\diver
z\|+\|z\cdot n\|_{1/2}).
\end{align}
Now assume that (\ref{0:temamcyl}) holds for $s-1$ and let us
prove it
  for $s$. Let $\tilde n$ be a  regular extension of $n$ in $D'$  such that $|\tilde n(x)|~\!=~\!1$.
Let us show that such an extension exists. To simplify the proof,
let us assume that  $d=0$ in the definition of $\tilde D$ (see
(\ref{0:0:dtildisahm})).  We define
\begin{align*}
\tilde n_1(x_1,x_2)&= -\frac{\gamma'(x_1)}{\sqrt{1+\gamma'(x_1)^2}}+h(x_1,x_2),\\
 \tilde
 n_2(x_1,x_2)&=\frac{x_2}{(1+\gamma(x_1))\sqrt{1+\gamma'(x_1)^2}},
\end{align*}
where $ h\in C_b^\infty(\overline{\tilde {D}})$,  $h|_{\p \tilde
D}=0$ and $ h(x_1,0)=1+
\frac{\gamma'(x_1)}{\sqrt{1+\gamma'(x_1)^2}}$. Then we have
$(\tilde n_1,\tilde n_2)|_{\p \tilde D} =n$ and $ |(\tilde
n_1,\tilde n_2)|>\delta$ for sufficiently small $\delta>0$. Hence,
$\tilde n(x)= \frac{(\tilde n_1,\tilde n_2)}{|(\tilde n_1,\tilde
n_2)|}$ is an extension of $ n$.
   Let us take $v:=\nabla^\bot(z\cdot\tilde n) $.
   Then $v\in L^2,$  $\diver v=0 $. Since  $v\cdot \tilde n $ is   the tangential derivative  of $z\cdot\tilde n $ along $\Gamma'$,
    we have $v\cdot \tilde n\in H^{s-3/2}(\Gamma') $.
   On the other hand
\begin{align*}
-\curl v=\Delta (z\cdot \tilde n) = (\Delta z_1 ) \tilde
n_1+(\Delta z_2) \tilde n_2+\tilde v,
\end{align*}
where $ \tilde v \in H^{s-2}$. It follows from the facts $\Delta z_1= \p_1\diver z+\p_2\curl z $
and $\Delta z_2= \p_2\diver z-\p_1\curl z $ that $\curl v\in H^{s-2}$.
   Thus the induction hypothesis yields $\nabla^\bot(z\cdot~\!\tilde n) ~\!\in ~\!H^{s-1}$. Hence,
  \begin{align*}
(\p_2 z_1)\tilde n_1+(\p_2 z_2)\tilde n_2\in H^{s-1},\\
(\p_1 z_1)\tilde n_1+(\p_1 z_2)\tilde n_2\in H^{s-1}.
\end{align*}
  Combining this with  $ \diver z\in H^{s-1}$ and $ \rot z\in H^{s-1}$,  we obtain  $\tilde n \cdot \nabla^\bot z_i \in H^{s-1} $ and
  $\tilde n \cdot \nabla z_i \in H^{s-1} $ for $i=1,2$. Thus $\nabla z_i \in H^{s-1}$, which completes the proof.

\end{document}